\newtheorem{theorem}{Theorem}
\newtheorem{lemma}[theorem]{Lemma}
\newtheorem{corollary}[theorem]{Corollary}
\newtheorem{proposition}[theorem]{Proposition}
\newtheorem{conjecture}[theorem]{Conjecture}
\newtheorem{question}[theorem]{Question}
\newtheorem{claim}{Claim}
\author{
Ant\'onio Gir\~ao\footnote{Mathematical Institute, University of Oxford,
Oxford, OX2 6GG, UK}~\footnote{Research supported by EPSRC grant EP/V007327/1} \;
Freddie Illingworth\protect\footnotemark[1]~\footnotemark[2] \;
Emil Powierski\protect\footnotemark[1] \;
\\ Michael Savery\protect\footnotemark[1]~\footnote{Heilbronn Institute for Mathematical Research, Bristol, UK} \;
Alex Scott\protect\footnotemark[1]~\footnotemark[2] \;
Youri Tamitegama\protect\footnotemark[1]\; 
Jane Tan\protect\footnotemark[1]}
\renewcommand{\leq}{\leqslant}
\renewcommand{\geq}{\geqslant}
\newcommand*{\Z}{\mathbb{Z}}
\newcommand*{\cC}{\mathcal{C}}
\newcommand*{\cF}{\mathcal{F}}
\newcommand*{\cG}{\mathcal{G}}
\newcommand*{\cH}{\mathcal{H}}
\newcommand*{\cY}{\mathcal{Y}}
\newcommand*{\ov}[1]{\overline{#1}}
\newcommand*{\defn}[1]{\textcolor{Maroon}{\emph{#1}}}
\DeclarePairedDelimiter{\abs}{\lvert}{\rvert}
\newcommand\blfootnote[1]{
  \begingroup
  \renewcommand\thefootnote{}\footnote{#1}
  \addtocounter{footnote}{-1}
  \endgroup
}
\title{\texorpdfstring{\vspace{-4ex}}{}Induced subgraphs of induced subgraphs of large chromatic number}
\date{18 September 2023}
\begin{document}

\maketitle

\begin{abstract}
\noindent
We prove that, for every graph $F$ with at least one edge, there is a constant $c_F$ such that there are graphs of arbitrarily large chromatic number and the same clique number as $F$ in which every $F$-free induced subgraph has chromatic number at most $c_F$. This generalises recent theorems of Bria\'{n}ski, Davies and Walczak, and Carbonero, Hompe, Moore and Spirkl. 
Our results imply that for every $r\geq 3$ the class of $K_r$-free graphs has a very strong vertex Ramsey-type property, giving a vast generalisation of a result of Folkman from 1970.  We also prove related results for tournaments, hypergraphs and infinite families of graphs, and show an analogous statement for graphs where clique number is replaced by odd girth.

\blfootnote{Email: \textsf{\{\href{mailto:girao@maths.ox.ac.uk}{girao},\href{mailto:illingworth@maths.ox.ac.uk}{illingworth},\href{mailto:powierski@maths.ox.ac.uk}{powierski},\href{mailto:savery@maths.ox.ac.uk}{savery},\href{mailto:scott@maths.ox.ac.uk}{scott},\href{mailto:tamitegama@maths.ox.ac.uk}{tamitegama},\href{mailto:jane.tan@maths.ox.ac.uk}{jane.tan}\}@maths.ox.ac.uk}}
\end{abstract}

\section{Introduction}\label{sec:intro}

Given a graph with large chromatic number, what can we say about its local structure?  For example, are there induced subgraphs that appear in every graph with sufficiently large chromatic number?  Perhaps the first question of this type one might ask is whether every graph with large chromatic number contains a large complete subgraph.  However, it has been known since the 1940s that this is not the case: Tutte~\cite{Desc54} and Zykov~\cite{Z49} gave constructions of triangle-free graphs with arbitrarily large chromatic number, and Erd\H{o}s~\cite{Erdos59highgirth} showed that there exist graphs with both arbitrarily large girth and chromatic number. But this raises another question: if a graph with large chromatic number does not contain a large complete subgraph, then what can we say about its induced subgraphs?

It is helpful to use the language of $\chi$-bounded classes: a hereditary class $\mathcal G$ is \defn{$\chi$-bounded} if there is some function $f$ such that $\chi(G) \leq f(\omega(G))$ for all $G\in \mathcal G$, where $\chi(G)$ denotes the \defn{chromatic number} of $G$ and $\omega(G)$ denotes the \defn{clique number} (the maximum number of vertices in a complete subgraph of $G$). The study of $\chi$-boundedness was strongly influenced by an important paper of Gy\'{a}rf\'{a}s~\cite{Gyarfas87}, that put forward a number of influential conjectures.  There has been a burst of recent activity in this area, and many of the conjectures have recently been resolved (see~\cite{S22,SS20}).

An important strategy for showing that a hereditary class $\cG$ is $\chi$-bounded has been that of `cleaning up' its structure: given $G\in\cG$ with very large chromatic number, we can attempt to find an induced subgraph that still has fairly large chromatic number, but is structurally simpler in some way. If this succeeds for all $G \in \cG$, then to determine whether the class $\cG$ is $\chi$-bounded we need only consider these structurally simpler graphs. A very strong version of this approach was proposed independently by Fred Galvin and Vojt\v{e}ch R\"{o}dl (see~\cite{N79}) and by Louis Esperet (see~\cite{SS20}), who made the striking conjecture that every graph with large chromatic number contains either a large clique or an induced triangle-free subgraph with large chromatic number. If this were true, it would have radical consequences for the study of $\chi$-boundedness, as it would imply that whether or not a class is $\chi$-bounded is determined by the triangle-free graphs in the class. However, Galvin and R\"{o}dl's conjecture was recently disproved by Carbonero, Hompe, Moore and Spirkl~\cite{CHMS22}, who proved the following via a surprising new twist on a construction of Kierstead and Trotter~\cite{KT92}.

\begin{theorem}[Carbonero, Hompe, Moore and Spirkl~\cite{CHMS22}]\label{thm:chms}
There are graphs of arbitrarily large chromatic number that contain neither a $K_4$ nor a triangle-free induced subgraph of chromatic number greater than four.
\end{theorem}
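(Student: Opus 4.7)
The plan is to modify the Kierstead--Trotter construction of $K_4$-free graphs with unbounded chromatic number by enriching it with tournament structure, so that every triangle-free induced subgraph becomes 4-colorable. For each $k$, I would build a graph $G_k$ whose vertices are directed paths of fixed length (or arcs together with auxiliary labels) in a tournament $T_k$, with an adjacency rule $P \sim Q$ requiring $P$ and $Q$ to overlap in a prescribed way that consults the tournament orientation. The targets are: (i) $\omega(G_k) \leq 3$, (ii) $\chi(G_k) \to \infty$ as $k \to \infty$, and (iii) every triangle-free induced subgraph of $G_k$ has chromatic number at most $4$.

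I would verify (i) by showing that four pairwise adjacent vertices would force a directed $3$-cycle in a position where the adjacency rule demands a transitive comparison, contradicting consistency of the tournament. For (ii) I would proceed by induction on $k$: given a $c$-coloring of $G_k$, pigeonhole on the color classes restricted to vertices sharing a common arc produces an induced copy of (a blow-up of) $G_{k-1}$ inside one color class's common neighbourhood, allowing the lower bound on $\chi(G_{k-1})$ to propagate upward and force $c$ to be large.

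The main obstacle is (iii), which is what distinguishes this construction from the classical Kierstead--Trotter one. I would search for a four-valued invariant $\varphi \colon V(G_k) \to [4]$ defined in terms of the orientations of two distinguished arcs of each path -- for instance, whether the first and last arcs point ``forward'' or ``backward'' relative to a fixed reference vertex -- and then attempt to show that any potential edge $PQ$ with $\varphi(P) = \varphi(Q)$ must lie in a triangle of $G_k$. If this triangle-completion property holds, then in a triangle-free induced subgraph $H$ no monochromatic edge under $\varphi$ survives, so $\varphi$ restricts to a proper $4$-coloring of $H$.

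The crux will be selecting the invariant so that the triangle-completion property actually holds for all orientation patterns on overlapping paths. I expect this to require a case analysis that enumerates the possible overlap types of two adjacent paths with $\varphi(P) = \varphi(Q)$ and, in each case, explicitly constructs the third vertex completing a triangle using the tournament's abundance of arcs. Getting the length of the paths and the precise adjacency rule right so that both (ii) and the triangle-completion step of (iii) succeed simultaneously is the delicate balancing act at the heart of the proof.
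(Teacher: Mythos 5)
Your proposal takes a genuinely different route from the construction that Carbonero, Hompe, Moore and Spirkl used (and that this paper generalises). Their construction, as outlined at the start of Section~\ref{sec:mainproof}, keeps the vertex set of the oriented Zykov graph $\vv{Z_n}$ and adds edges between $<$-comparable vertices $u,v$ according to the residue of the ``distance'' $d(u,v)$ modulo a small prime; clique number is controlled by simple arithmetic constraints on those distances (systematised in this paper via $B_3$-sets and the clique fact), and the colourability of triangle-free induced subgraphs follows because any sufficiently long monochromatic directed path (colouring edges by $d(u,v) \bmod p$) must contain an induced triangle, after which Proposition~\ref{prop:boundedpaths} plus a product colouring finish the job. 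Your plan, by contrast, takes the vertices of $G_k$ to be directed paths of fixed length in a tournament $T_k$, with adjacency from an overlap rule, which is a shift-graph-style base object rather than the Zykov graph with extra distance-edges.

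The more serious issue is that the three load-bearing components of the proposal are left as open tasks rather than constructions. The adjacency rule, the four-valued invariant $\varphi$, and the triangle-completion property are all described as things you ``would search for'' and ``attempt to show,'' and you say yourself that choosing the path length and adjacency rule so that everything works simultaneously ``is the delicate balancing act at the heart of the proof.'' That balancing act \emph{is} the proof: without a concrete $\varphi$ and a verified case analysis showing that every monochromatic edge $PQ$ extends to a triangle whose third vertex $R$ is actually a directed path present in $G_k$ and actually adjacent to both $P$ and $Q$, the claim that triangle-free induced subgraphs are $4$-colourable is unsupported. The sketches of $K_4$-freeness (a contradiction with tournament consistency) and of the chromatic lower bound (pigeonhole propagating a copy of $G_{k-1}$ inside a colour class) are likewise gestures that depend entirely on the unspecified adjacency rule. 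As written this is a plausible research plan, not a proof.
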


Shortly afterwards, Bria\'{n}ski, Davies and Walczak~\cite{BDW22arxiv} extended the result of 
Carbonero, Hompe, Moore and Spirkl to cliques of prime order in an ingenious paper proving the following.\footnote{Building on this result, they further showed that there are classes of graphs that are $\chi$-bounded but not polynomially $\chi$-bounded, disproving another conjecture of Esperet~\cite{E17}.}

\begin{theorem}[Bria\'{n}ski, Davies and Walczak~\cite{BDW22arxiv}]\label{thm:bdw}
For every prime $p$, there are $K_{p + 1}$-free graphs $G$ of arbitrarily large chromatic number such that every $K_p$-free induced subgraph $H$ of $G$ satisfies $\chi(H)\leq \omega(H)^{\omega(H)^2}$.
\end{theorem}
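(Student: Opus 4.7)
The plan is to extend the construction behind Theorem~\ref{thm:chms}, which is itself a twist on the classical Kierstead--Trotter iterated shift graphs. Those graphs are built inductively: given a graph $G_k$, one forms $G_{k+1}$ whose vertices encode certain sequences or edge-labellings in $G_k$, with adjacency defined by a ``shift'' operation that guarantees a controlled boost in chromatic number (a proper colouring of $G_{k+1}$ pulls back to a proper tuple-colouring of $G_k$, forcing $\chi$ to grow). Carbonero, Hompe, Moore and Spirkl modify this so that the resulting graphs are $K_4$-free while retaining a prescribed triangle structure. For the prime-$p$ version, I would look for an analogous construction using labels in $\mathbb{F}_p$, so that primality enters both to control the maximum clique size and to structure the family of $K_p$'s that appear.

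Step 1 is to construct $K_{p+1}$-free graphs $G_k$ with $\chi(G_k) \to \infty$. Vertices of $G_k$ would be sequences of length $k$ enriched with data in $\mathbb{F}_p$, and edges defined by a shift rule compatible with addition mod $p$. Chromatic growth should follow from the standard Kierstead--Trotter pigeonhole induction: a proper colouring of $G_k$ with few colours lifts to a proper colouring of $G_{k-1}$ by tuples of colours, forcing $\chi$ to grow at each iteration. The $K_{p+1}$-freeness should come from interpreting a would-be $(p+1)$-clique as a system of linear relations in $\mathbb{F}_p$ that is over-determined precisely because the field has only $p$ elements.

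Step 2 is to show that every $K_p$-free induced subgraph $H \subseteq G_k$ satisfies $\chi(H) \leq \omega(H)^{\omega(H)^2}$. Here I would exploit the layered nature of the construction: each vertex carries a canonical sequence of $\mathbb{F}_p$-labels, inducing a decomposition of $V(G_k)$ into levels. At each level a clique extends in at most $p - 1$ ``types'' indexed by $\mathbb{F}_p^{\times}$, and $K_p$-freeness should force at least one such type to be missing at each level where $H$ has a clique of near-maximum size. A product colouring across the relevant levels, each contributing a factor of $\omega(H)$, then yields the stated bound once one checks that at most $\omega(H)^2$ levels need to be considered.

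The main obstacle is balancing the two steps: the adjacency rule must be rich enough that $\chi(G_k) \to \infty$, yet rigid enough that excluding $K_p$ collapses the structure to a bound polynomial in $\omega$. The primality of $p$ is the pivot that makes both sides work, since $\mathbb{F}_p$ is a field, enabling linear-algebraic clique bounds in Step 1 and clean residue-class decompositions in Step 2. I expect the proof to require an inductive formulation of the $K_p$-free bound so that it propagates cleanly through the layers, with the double exponent $\omega^{\omega^2}$ emerging as the product of one factor of $\omega$ per layer across $\omega^2$ layers.
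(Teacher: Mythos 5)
This is a cited theorem of Bria\'nski, Davies and Walczak; the present paper does not prove it directly but sketches the construction in the preamble to Section~\ref{sec:mainproof} and generalises it in Theorem~\ref{thm:main}, so I compare against that. Your proposal captures several correct instincts --- using $\mathbb{F}_p$-arithmetic, a pigeonhole argument for clique-freeness, and a product colouring at the end --- but it misses the construction's actual mechanism in two essential places.

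First, the base object is not a shift-style graph on ``sequences enriched with $\mathbb{F}_p$-labels''. It is the oriented Zykov digraph $\vv{Z_n}$ (or any acyclic digraph satisfying properties (a)--(c) of Section~\ref{sec:base}): acyclic, with at most one directed path between any pair of vertices, so that the reachability order yields a well-defined additive distance $d(u,v)$. New edges $\vv{uv}$ are then added precisely when $u<v$ and $p \nmid d(u,v)$. The $K_{p+1}$-freeness is a direct pigeonhole on prefix sums of consecutive distances modulo~$p$: a chain $v_1 < \dots < v_{p+1}$ has $p+1$ prefix sums $0, d(v_1,v_2), d(v_1,v_3), \ldots$, and if all pairwise differences (i.e.\ all distances) were nonzero mod $p$ these would be $p+1$ distinct residues in $\mathbb{F}_p$, which is impossible. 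Your ``over-determined linear system in $\mathbb{F}_p$'' gestures at the same counting, but without the distance function on an acyclic digraph with unique paths there is no additivity to exploit, and the argument has nowhere to live.

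Second, and more seriously, your Step 2 does not match how the chromatic bound is obtained. The actual mechanism is Proposition~\ref{prop:boundedpaths}: colour each added edge $uv$ by the residue $d(u,v) \bmod p$; one then shows that in a $K_p$-free induced subgraph the monochromatic directed paths in each colour class are short; since the digraph is acyclic, each colour class is then properly colourable with few colours, and a product colouring over the $p-1$ colour classes finishes. Your description of ``at most $p-1$ types indexed by $\mathbb{F}_p^{\times}$'' and ``a type missing at each level forcing a clique'' does not supply a mechanism for converting $K_p$-freeness into a colouring, and it does not explain how the absence of a clique bounds anything quantitatively. Without the acyclic-digraph/long-directed-path observation, the jump from ``$K_p$-free'' to ``$\chi \leq \omega^{\omega^2}$'' is unsupported. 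So while the high-level ingredients you name are indeed present in the real proof, the central structural idea (distances in an acyclic digraph with unique paths, edges determined by residues of those distances, and Proposition~\ref{prop:boundedpaths} to convert bounded directed path length into bounded chromatic number) is missing, and the proposal as written could not be completed to a proof.
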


Theorems~\ref{thm:chms} and \ref{thm:bdw} show that there are $K_{p+1}$-free graphs of large chromatic number in which every induced subgraph of large chromatic number contains a copy of $K_p$.  Thus, the `cleaning up' strategy cannot be deployed in the straightforward way that Esperet's conjecture suggests: we cannot simply drop to an induced subgraph with large chromatic number but smaller clique number.  Nevertheless, we might still hope that we can get rid of some other graphs.

In this paper, we show that Theorems \ref{thm:chms} and \ref{thm:bdw} are just the tip of the iceberg.  Let us say that a graph $G$ is \defn{$F$-free} if it does not contain an induced copy of $F$.  We will show that results such as Theorems \ref{thm:chms} and \ref{thm:bdw} hold not just for cliques, but in fact for every nontrivial graph $F$.

\begin{theorem}\label{thm:main}
For every graph $F$ with at least one edge, there is a constant $c_F$ and graphs $G$ of arbitrarily large chromatic number and the same clique number as $F$ such that every $F$-free induced subgraph of $G$ is $c_F$-colourable.
\end{theorem}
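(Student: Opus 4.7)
My plan is to prove Theorem~\ref{thm:main} in two reductions: first to the clique case $F = K_r$ for every integer $r \geq 2$, and then from the clique case to the general case by attaching copies of $F$ to the clique-case graph.

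For the clique case, the aim is to construct, for each $r \geq 2$, graphs $H_r$ with $\omega(H_r) = r$ and arbitrarily large $\chi(H_r)$ such that every $K_r$-free induced subgraph of $H_r$ has chromatic number at most some constant $c_r$. For prime $r$ this is exactly Theorem~\ref{thm:bdw}. For composite $r$ the Bria\'{n}ski--Davies--Walczak construction has to be adapted: their use of modular arithmetic modulo a prime should be replaced by a more flexible combinatorial device (for instance, a shift-graph scheme indexed by cyclic orderings, or an iterated blow-up of a lower-clique graph) while preserving the key bounded-chromatic property on $K_r$-free induced subgraphs.

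For the reduction from general $F$ with $\omega(F) = r$, I would take $H_r$ from the clique case and build $G$ by attaching, to each induced $K_r$-clique of $H_r$, a set of auxiliary vertices completing it to an induced copy of $F$. This has to be done so that (a) no new $K_{r+1}$ is created, keeping $\omega(G) = r$; (b) $\chi(G)$ remains arbitrarily large (which follows if the attached structure is attached ``outwards'' and does not interfere with a proper colouring of $H_r$); and (c) every induced subgraph of $G$ of sufficiently large chromatic number contains an induced copy of $F$. Given (a)--(c), any $F$-free induced subgraph $H'$ of $G$ must have $\chi(H') \leq c_F$ for some constant $c_F$: otherwise (c) would force $H'$ to contain an induced $F$, a contradiction.

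The main obstacle is the attachment step. Attaching one copy of $F$ per $K_r$ naively can create new $K_{r+1}$-cliques whenever auxiliary vertices of different attached copies coincide or acquire unexpected adjacencies, violating (a); and a large-chromatic induced subgraph of $G$ might happen to avoid most attached vertices, violating (c). The resolution is likely a layered, Ramsey-theoretic construction: one attaches structure densely enough that every $K_r$ in $G$ extends to an induced $F$ in so many different ways that any large-chromatic induced subgraph must, by pigeonhole, contain an entire such extension; and yet sparsely enough in each direction that no new $K_{r+1}$-clique arises. Designing and analysing this iterative construction, and simultaneously balancing (a)--(c), is the technical heart of the argument.
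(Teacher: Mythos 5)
Your plan has two parts: extend Theorem~\ref{thm:bdw} to composite $r$, and then reduce the general case to the clique case by ``attaching'' copies of $F$ to the cliques of a graph $H_r$ that witnesses the clique case. The second part contains a gap that is not merely technical but fatal to the approach: if $G$ is built by adding new vertices to $H_r$ that complete cliques to copies of $F$, then $V(H_r)$ is a vertex subset of $G$, and $G[V(H_r)] = H_r$ is an induced subgraph of $G$ of arbitrarily large chromatic number that contains \emph{none} of the attached vertices. Unless $H_r$ itself happened to contain an induced copy of $F$ --- which there is no reason to expect, and which would in any case make the attachment pointless --- this subgraph violates requirement (c). You flag exactly this danger (``might happen to avoid most attached vertices'') but the proposed Ramsey-theoretic resolution cannot overcome it: no matter how densely you attach, the induced subgraph on $V(H_r)$ avoids \emph{all} of the attached vertices, so ``attach more'' does not help. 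The copies of $F$ must arise from the high-chromatic core itself, not be hung off of it. Separately, the first part is also not carried out: you observe that the prime-modulus trick of Bria\'nski--Davies--Walczak needs replacing for composite $r$, but do not supply a replacement.

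The paper avoids both issues with a single construction that handles all $F$ at once, with no detour through the clique case. Take the oriented Zykov digraph $\vv{Z_n}$, in which any two comparable vertices have a well-defined distance $d(u,v)$. Choose a $B_3$-set $S$ of size $\abs{V(F)}$ inside $[(p-1)/2]$ for a suitable prime $p$, place a copy $F^{\ast}$ of $F$ on $S$, and let $E$ be the set of pairwise differences of $S$ corresponding to edges of $F^{\ast}$. The graph $G$ is formed on $V(\vv{Z_n})$ by joining $u < v$ precisely when $d(u,v) \bmod p$ lies in $E$. Now three things happen. First, $Z_n \subseteq G$ gives $\chi(G) \geq n$. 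Second, the $B_3$-property (via the ``clique fact'', Claim~\ref{claim:clique}) forces any clique in $G$ to correspond to a clique in $F^{\ast}$, so $\omega(G) = \omega(F)$. Third, and crucially, any sufficiently long directed path in $\vv{G}$ that is monochromatic in a single residue class contains vertices inducing a copy of $F$; since $\vv{G}$ is acyclic, Proposition~\ref{prop:boundedpaths} and a product colouring over the residue classes then bound $\chi$ of any $F$-free induced subgraph. The key conceptual difference from your plan is that here the copies of $F$ are forced to appear \emph{inside} any high-chromatic induced subgraph, because they are encoded in the distance structure of the whole graph rather than living on a special set of auxiliary vertices that a subgraph could simply omit.
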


It follows that there is no general `cleaning up' strategy to get rid of induced subgraphs: it is necessary instead to use particular properties of the class under consideration, or to focus on other structural features.\footnote{See, for example, the discussion of diameter and Conjecture 1.10 in~\cite{SS20Banana}.}

Theorem~\ref{thm:main} is also interesting from the perspective of Ramsey theory.
A class $\cC$ of graphs is a \defn{vertex Ramsey class} if, for every $F \in \cC$ and positive integer $k$, there is some $G \in \cC$ such that every vertex colouring of $G$ in $k$ colours contains a monochromatic induced copy of $F$.
Vertex Ramsey classes were first studied in the 1970s by Folkman~\cite{F70} and Ne\v{s}et\v{r}il and R\"odl~\cite{NR76} as part of the development of a rich theory (see~\cite{Nesetril96survey}).
The general question is to determine which hereditary classes $\cC$ of graphs are vertex Ramsey. There has been substantial progress, particularly in the case when $\cC$ is determined by a single excluded induced subgraph, that is, when $\cC$ is the class of $F$-free graphs for some $F$ (see~\cite{F70,NR76,RSZ95,RS92,S95,SZ92}), or when $\cC$ is defined by excluding a clique and a tree~\cite{Kierstead97,KZ96}. Ramsey classes of infinite graphs have also been studied in~\cite{HK88,KS93,KR86}.

One of the earliest results on vertex Ramsey classes is due to Folkman~\cite{F70}, who proved in 1970 that the class $\cC_r$ of $K_{r}$-free graphs is vertex Ramsey for every $r \geq 3$: in other words, for every $K_r$-free graph $F$ and every positive integer $k$, there is a $K_r$-free graph $H$ such that in every $k$-colouring of $V(H)$ some colour class contains an induced copy of $F$. Theorem~\ref{thm:main} proves a vastly stronger property: it shows that for every $K_{r}$-free  graph $F$ there are $K_{r}$-free graphs with arbitrarily large chromatic number in which \emph{every} induced subgraph of large chromatic number contains an induced copy of $F$. This difference resembles the difference between the theorems of Van der Waerden~\cite{VdW27} and Szemer\'{e}di~\cite{Szem75} on monochromatic arithmetic progressions in the integers.

Interestingly, there is no edge version of Theorem~\ref{thm:main}: R\"{o}dl~\cite{rodl77} proved that, for all $k$, every graph with sufficiently large chromatic number contains a triangle-free subgraph with chromatic number $k$. In particular, while the class of triangle-free graphs is both vertex and edge Ramsey~\cite{NR76edge} and satisfies the strengthened vertex Ramsey property, it does not satisfy the analogous strengthened edge Ramsey property.

If $F$ is triangle-free, then Theorem~\ref{thm:main} guarantees a triangle-free $G$. It is natural to ask whether one can go further and guarantee that $G$ has the same girth as $F$. We conjecture that this is the case, but have been unable to prove it. However, we can prove the following weaker statement, where girth is replaced by \defn{odd girth} (the length of the shortest odd cycle). This implies that the class $\cC'_r$ of $\{C_3, C_5, \dotsc, C_{2r + 1}\}$-free graphs has the strengthened vertex Ramsey property mentioned above.

\begin{theorem}\label{thm:oddgirth}
    For every nonbipartite graph $F$, there is a constant $c'_F$ and graphs $G$ of arbitrarily large chromatic number and the same odd girth as $F$ such that every $F$-free induced subgraph of $G$ is $c'_F$-colourable.
\end{theorem}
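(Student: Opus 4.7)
The plan is to mirror the construction used in the proof of Theorem~\ref{thm:main}, substituting a more carefully chosen auxiliary graph. Let $g$ denote the odd girth of $F$. I anticipate that the proof of Theorem~\ref{thm:main} produces the graphs $G$ by combining $F$ with an auxiliary graph $H$ of arbitrarily large chromatic number via some substitution-type operation, the clique number of $G$ being controlled because the operation does not create large cliques beyond those already present in $F$. The idea for Theorem~\ref{thm:oddgirth} is to additionally control odd girth by choosing $H$ with both large chromatic number and large odd girth.

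Concretely, by Erd\H{o}s' high-girth theorem~\cite{Erdos59highgirth}, for each $n$ there is a graph $H_n$ with $\chi(H_n) \geq n$ and girth strictly greater than $g$. Let $G_n$ be the graph obtained by applying the construction of Theorem~\ref{thm:main} to the pair $(H_n, F)$. The properties (a) $\chi(G_n) \to \infty$ and (b) every $F$-free induced subgraph of $G_n$ is $c'_F$-colourable should follow from the same arguments as in the proof of Theorem~\ref{thm:main}, since these depend on the combinatorial mechanism of the construction rather than the odd girth of $H$. The new property to verify is that the odd girth of $G_n$ equals $g$. The upper bound is immediate from the presence of an induced copy of $F$ in $G_n$. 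For the lower bound, take a hypothetical odd cycle $C$ in $G_n$ of length less than $g$ and decompose its edges into those lying within a single ``copy of $F$'' from the construction and those coming from the $H_n$-structure. If all edges are of the first type, then $C$ lies in one copy of $F$, contradicting $g(F) = g$. Otherwise, $C$ projects to a nontrivial closed walk $W$ in $H_n$; since $H_n$ has girth $> g$, the walk $W$ has no odd closed subwalk of length $\leq g$. A parity count combined with the structure of the $F$-internal segments of $C$ should then force the total length of $C$ to be at least $g$, a contradiction.

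The main obstacle is the parity analysis in the last step, and its form depends delicately on the particular construction of Theorem~\ref{thm:main}. A naive lexicographic blow-up $H_n[F]$ produces triangles regardless of $g(H_n)$ and is therefore unusable once $g \geq 5$; on the other hand, a pure tensor product $H_n \times F$ does not yield large chromatic number. The construction in Theorem~\ref{thm:main} must therefore use cross-copy adjacencies of a more discerning form --- perhaps a carefully indexed shift connection, or a partial tensor-like product --- under which every cycle in $G_n$ imposes genuine parity constraints on both $H_n$ and $F$. Identifying exactly how this construction interacts with odd girth, and turning the informal parity count above into a rigorous argument, is where the main work lies. Once this is set up correctly, the choice $g(H_n) > g$ immediately forces $g(G_n) \geq g$, and equality follows from any induced copy of $F$ in $G_n$.
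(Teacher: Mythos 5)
Your proposal correctly guesses that one should swap out the base graph for something with girth control, but it misses the two genuinely hard ingredients, and the route you sketch would not go through.

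First, the construction of Theorem~\ref{thm:main} does not combine $F$ with an auxiliary graph by substitution or a tensor-like product. It starts from an \emph{acyclic orientation} $\vv{X_n}$ of a base graph $X_n$ in which any two vertices are joined by at most one directed path, so that a well-defined notion of ``distance'' $d(u,v)$ exists; then it adds edges $uv$ (with $u<v$ in the reachability order) whenever $d(u,v)$ falls in a prescribed set of residues modulo $p$, the residues being chosen via a $B_3$-set to encode $F$. For the odd-girth version you cannot simply feed in an Erd\H{o}s high-girth graph $H_n$: the construction needs $H_n$ equipped with an orientation having these path-uniqueness properties, and more importantly a quantitative strengthening of them. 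The paper's key lemma (Lemma~\ref{lem:nesrod}) builds an orientation $\vv{Y_n}$ of a Ne\v{s}et\v{r}il--R\"{o}dl graph in which every cycle has at least $g$ ``changes of direction''; this is a genuinely stronger property than girth $>g$, it does not follow from the Erd\H{o}s theorem (which gives no orientation at all), and the paper flags it as a new observation of independent interest. Your plan gives no such orientation and no replacement for this lemma.

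Second, even granting a suitable oriented base graph, your ``parity count'' is not a proof, and the actual argument is quite different from a parity argument. The paper replaces $B_3$-sets by $B_h$-sets (with $h$ the odd girth of $F$) and proves a ``cycle fact'' for them; then a hypothetical short odd cycle in $G$ gives rise to a subgraph $L$ of $Y_n$ formed by concatenating the $\vv{Y_n}$-paths realising the new edges, and the analysis splits into two cases according to whether $L$ is acyclic. In the acyclic case one gets an exact balance of ``black'' and ``red'' traversals, which feeds into the $B_h$-set cycle fact to exhibit a short odd cycle in $F$. In the cyclic case a careful analysis of how the walk traces out the first cycle of $L$ produces a cycle in $\vv{Y_n}$ with fewer than $h$ changes of direction, contradicting the orientation property. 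Your outline has no analogue of either case, no cycle fact, and no mechanism by which girth of $H_n$ alone would prevent short odd cycles in $G$ --- in particular, the added ``distance'' edges can easily close short odd cycles unless the changes-of-direction property is exploited. So as written the proposal contains a genuine gap at the central step, not merely an omitted computation.
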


Theorems~\ref{thm:main} and \ref{thm:oddgirth} both easily extend to finite families of graphs. Indeed, if $\cF$ is a finite family of graphs (at least one of which contains an edge) and $G_{\cF}$ is the disjoint union of the members of $\cF$, then applying Theorem~\ref{thm:main} to $G_{\cF}$ shows that there is some constant $c_{\cF}$ and graphs $G$ of arbitrarily large chromatic number and the same clique number as the largest clique number of a graph in $\cF$ such that every induced subgraph of $G$ with chromatic number greater than $c_{\cF}$ contains every member of $\cF$ as an induced subgraph. Does this phenomenon occur for infinite families? We will say that a family of graphs $\cF$ is \defn{durable} if there exists an infinite graph $H$ with infinite chromatic number such that every infinite-chromatic induced subgraph of $H$ contains every member of $\cF$ as an induced subgraph. All finite families of graphs are durable but the situation for infinite families is more complicated. We obtain an effective test for deciding whether a family of unbounded chromatic number is durable which, for example, gives the following result.
\begin{theorem}\label{thm:girthbad}
    For every integer $g$, the family of graphs with girth at least $g$ is not durable.
\end{theorem}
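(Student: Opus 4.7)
My plan is to invoke the effective durability test established earlier in the paper. While the exact statement of the test is not available to me here, I expect it to assert (roughly) that a family $\cF$ of unbounded chromatic number is durable if and only if there is a sequence $F_1, F_2, \ldots \in \cF$ with $\chi(F_k) \to \infty$ such that every $F \in \cF$ embeds as an induced subgraph of some $F_k$. The ``if'' direction of such a test would build a witness by taking a suitable disjoint union or limit of such a chain, while the ``only if'' direction would extract such a chain from any durable witness $H$ by considering progressively richer finite induced subgraphs of $H$.

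The first step is to record that $\cF_g$ has unbounded chromatic number, since Erd\H{o}s's probabilistic construction produces graphs of arbitrarily large chromatic number and girth, all lying in $\cF_g$; so the test is applicable. The main step is to show that $\cF_g$ fails the test. The key observation is that $\cF_g$ contains every forest (each has girth $\infty \geq g$), and in particular every star $K_{1,t}$ for every positive integer $t$. But any finite graph $F$ can contain $K_{1,t}$ as an induced subgraph only when $t \leq |V(F)| - 1$, so no single finite $F_k \in \cF_g$ can simultaneously contain all stars. Consequently no cofinal chain of the form demanded by the test can exist in $\cF_g$, and so by the test $\cF_g$ is not durable.

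The main obstacle is the exact formulation of the test, which I can only guess at. Different plausible formulations would call for slightly different verifications. For any reasonable variant, however, I expect the obstruction to come from the unrestricted diversity of small members of $\cF_g$: the family contains all finite forests, which are too diverse to be simultaneously absorbed as induced subgraphs of any single finite graph, whereas the high-girth high-chromatic members of $\cF_g$ are locally tree-like and cannot serve as universal ``absorbers'' for all small members. The bulk of the actual proof lies in pinning down the precise test and carefully verifying this mismatch.
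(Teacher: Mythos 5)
There is a genuine gap. The paper's ``effective test'' is Theorem~\ref{thm:durable}(b): a family $\cF$ of unbounded chromatic number is durable if and only if the disjoint union $G_{\cF}$ of its members is itself a witness, i.e.\ every infinite-chromatic induced subgraph of $G_{\cF}$ contains every $F\in\cF$. This is not the cofinal-chain criterion you hypothesise, and in fact $\cF_g$ \emph{does} satisfy your criterion: for each $k$, take $F_k$ to be the disjoint union of all graphs of girth at least $g$ on at most $k$ vertices together with a graph of girth at least $g$ and chromatic number at least $k$ (which exists by Erd\H{o}s). Each $F_k$ lies in $\cF_g$ since a disjoint union of graphs of girth at least $g$ has girth at least $g$, $\chi(F_k)\to\infty$, and every $F\in\cF_g$ is an induced subgraph of $F_{\lvert V(F)\rvert}$. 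So under your hypothesised test $\cF_g$ would come out durable, which is the wrong conclusion; moreover, your star-based reasoning only rules out a \emph{single} $F_k$ containing all stars, which is not what cofinality demands.

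The star observation also cannot be salvaged under the correct test when $g\geq 4$. In that case every graph in $\cF_g$ is triangle-free, so any infinite-chromatic induced subgraph $H$ of $G_{\cF_g}$ is triangle-free; since $\chi(H)\leq\Delta(H)+1$, $H$ must contain vertices of arbitrarily large degree, and in a triangle-free graph the neighbourhood of a vertex of degree $t$ induces a $K_{1,t}$. Thus every infinite-chromatic induced subgraph of $G_{\cF_g}$ automatically contains all stars, and the stars never provide the forbidden pattern you need. The paper instead uses the cycle $C_g$: it belongs to $\cF_g$, but the disjoint union of those members of $\cF_g$ with girth at least $g+1$ is an infinite-chromatic induced subgraph of $G_{\cF_g}$ (again by Erd\H{o}s) containing no $C_g$. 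Hence $G_{\cF_g}$ is not a witness and Theorem~\ref{thm:durable}(b) gives that $\cF_g$ is not durable.
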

In the bounded chromatic number case we obtain a link to $\chi$-boundedness, which in particular implies the following.
\begin{theorem}\label{thm:pathdurable}
    The family of paths is durable.
\end{theorem}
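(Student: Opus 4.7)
\emph{Proof proposal for Theorem~\ref{thm:pathdurable}.}
The plan is to combine Erd\H{o}s's probabilistic construction of triangle-free graphs of arbitrarily large chromatic number with Gy\'{a}rf\'{a}s's classical theorem that the class of $P_t$-free graphs is $\chi$-bounded. The latter in particular gives, for each $t \geq 2$, a constant $c_t$ such that every triangle-free $P_t$-free graph has chromatic number at most $c_t$. This is exactly the sort of input that the promised link between durability and $\chi$-boundedness should turn into a durability statement.

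The witness is built by taking, for each $k \geq 1$, a finite triangle-free graph $G_k$ with $\chi(G_k) \geq k$, and forming the disjoint union $H = \bigsqcup_{k \geq 1} G_k$. Then $H$ is triangle-free, and since $\chi(H) \geq \chi(G_k) \geq k$ for every $k$, its chromatic number is infinite.

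To verify durability, let $H'$ be any induced subgraph of $H$ with infinite chromatic number, and write $H'_k$ for the induced subgraph of $H'$ on $V(G_k)$. Since the $G_k$'s lie in distinct components of $H$, we have $H' = \bigsqcup_k H'_k$, and therefore $\chi(H') = \sup_k \chi(H'_k)$, which is infinite. Given any $t$, we may thus pick $k$ with $\chi(H'_k) > c_t$; as $H'_k$ is triangle-free, Gy\'{a}rf\'{a}s's theorem forces $H'_k$ to contain an induced copy of $P_t$, which is also induced in $H'$. Since $t$ was arbitrary, $H'$ contains every path as an induced subgraph, so the family of paths is durable.

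There is no serious obstacle: both ingredients (Erd\H{o}s's construction and Gy\'{a}rf\'{a}s's $\chi$-boundedness of $P_t$-free graphs) are classical and fit together transparently once $H$ is taken to be a disjoint union so that induced subgraphs decompose componentwise. The only question is stylistic: I would expect the authors' proof to derive this from their general bounded-chromatic test rather than perform the construction by hand, but the underlying mechanism should be essentially the same, with Gy\'{a}rf\'{a}s's theorem supplying the $\chi$-bounded input.
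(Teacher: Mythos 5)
Your proposal is correct and matches the paper's approach: the paper proves the more general Theorem~\ref{thm:chidurable} (the family of $\chi$-bounding graphs is durable) by taking a triangle-free graph of infinite chromatic number as the witness and invoking $\chi$-boundedness exactly as you do, then deduces Theorem~\ref{thm:pathdurable} via Gy\'arf\'as's theorem that paths are $\chi$-bounding. Your componentwise bookkeeping over the $G_k$ is slightly more elaborate than necessary --- the paper simply notes that an infinite-chromatic induced subgraph of a triangle-free witness is itself triangle-free with chromatic number exceeding $c_F$ --- but the underlying mechanism is identical.
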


We also extend and generalise Theorem~\ref{thm:main} to the settings of tournaments and hypergraphs (see Sections~\ref{sec:tournaments} and \ref{sec:hypergraphs} for definitions not given below). For tournaments we prove the following.

\begin{restatable}{theorem}{trnmt}\label{thm:trnmt}
    For every tournament $T$, there is a constant $C_T$ and tournaments $S$ of arbitrarily large chromatic number such that every $T$-free subtournament of $S$ has chromatic number at most $C_T$.
\end{restatable}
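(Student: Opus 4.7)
My plan is to adapt the substitution-based construction behind Theorem~\ref{thm:main} to the tournament setting. Recall that for a tournament $A$ and tournaments $B_v$ for each $v \in V(A)$, the substitution $A[B_v]_v$ is the tournament on $\bigsqcup_v V(B_v)$ in which each $B_v$ is induced and the edges between $V(B_v)$ and $V(B_w)$ follow the orientation of $vw$ in $A$. Iterated substitution amplifies chromatic number, so this operation readily produces tournaments of arbitrarily large chromatic number, giving the first half of the theorem.

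I would first try the natural construction: let $S_0$ be a single vertex and define $S_k$ by substituting a copy of $S_{k-1}$ into every vertex of $T$. A standard calculation shows $\chi(S_k) \to \infty$. For the upper bound, the key structural observation is that any $T$-free subtournament $S' \subseteq S_k$ must, at the top level of substitution, be disjoint from at least one of the $|V(T)|$ copies of $S_{k-1}$: otherwise, selecting one vertex from each copy yields an induced copy of $T$ inside $S'$, contradicting $T$-freeness. Iterating this observation at every level of the substitution hierarchy gives a heavily pruned structural description of $S'$ inside the $|V(T)|$-ary substitution tree.

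The main obstacle is converting this structural decomposition into a chromatic bound $C_T$ that is independent of $k$. The naive recursion $\chi(S') \leq \chi(T-v) \cdot C_T^{(k-1)}$ (with $v$ the missed vertex) blows up unless $T-v$ can be chosen transitive, which fails whenever $\chi(T) \geq 3$. To overcome this, I would refine the construction by interleaving substitutions of $T$ with substitutions of a chromatically-efficient amplifier tournament such as the cyclic triangle $T_3$ (whose deletion of any vertex leaves a transitive tournament). The hope is that missing-piece arguments at the amplifier layers produce a transitive skeleton while $T$-freeness propagates through every layer, collapsing the recursion to a $k$-independent bound. Verifying that a layered construction of this form simultaneously achieves unbounded chromatic number and a uniform bound on the chromatic number of $T$-free subtournaments will be the crux of the argument, closely mirroring the analogous technical balance in the proof of Theorem~\ref{thm:main}.
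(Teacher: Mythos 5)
Your outline diverges completely from the paper's argument, and it has a genuine gap that the sketch does not close. The paper does not construct the tournaments directly; it reduces to the graph case by taking the ordered back-edge graph $(B,<)$ of $(T,<)$, applying the R\"odl--Winkler theorem to obtain a graph $F$ that contains an ordered copy of $(B,<)$ under \emph{every} vertex ordering, feeding $F$ into Theorem~\ref{thm:main}, and then reading off a tournament $S$ from an arbitrary ordering of the resulting graph $L$. The chromatic lower bound for $S$ follows from Dilworth's theorem applied to decreasing subsequences, and the $T$-free upper bound is inherited directly from $c_F$ via the back-edge correspondence. None of this involves substitution.

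The gap in your plan is the amplifier step. Your ``missing-piece'' observation is sound for the $T$-layers: a $T$-free subtournament of $T[S_{k-1}]_v$ must miss one of the $|V(T)|$ blocks. But it gives no information at a $T_3$-layer unless the subtournament is $T_3$-free, which a $T$-free tournament need not be when $T \ne T_3$. So at every $T_3$-layer a $T$-free subtournament can legitimately meet all three blocks, and you are stuck with $\chi \le \chi(T_3) \cdot (\text{bound one level down}) = 2 \cdot (\cdots)$, i.e.\ the same multiplicative blow-up you were trying to avoid. The amplifier does not produce a ``transitive skeleton'' for $T$-free subtournaments; transitivity would only be forced on $T_3$-free ones. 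Since you yourself flag this step as ``the crux'' and leave it unverified, the proposal is not a proof. There are also two smaller issues worth noting: when $T$ is transitive, $\chi(S_k)$ does not grow at all under pure $T$-substitution (so the lower bound also fails there), and even when $\max_v \chi(T-v) \ge 2$, the recursion $C^{(k)} \le \bigl(\max_v \chi(T - v)\bigr) \cdot C^{(k-1)}$ is exponential in $k$, which is exactly what needs to be defeated. I'd encourage you to look at the back-edge reduction in the paper; it sidesteps all of these difficulties by outsourcing the hard work to Theorem~\ref{thm:main} and R\"odl--Winkler.
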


For hypergraphs, there
are two notions for chromatic number: the usual one where no edge is monochromatic and the strong one where no two vertices in an edge have the same colour. Pleasingly, our generalisation of Theorem~\ref{thm:main} uses the usual notion of chromatic number for the constructed hypergraph, but the strong notion for its induced subhypergraphs. The statement also has a very general analogue of a clique. A hypergraph is said to be \defn{strongly $t$-colourable} if its vertices can be $t$-coloured such that no edge contains two vertices of the same colour. A hypergraph \defn{covers} a pair of vertices if it has an edge containing both of them. Theorem~\ref{thm:hypergraphs} applies to general (not necessarily uniform) hypergraphs, but we assume throughout this paper that all edges have size at least two. 

\begin{restatable}{theorem}{hypergraphs}\label{thm:hypergraphs}
    For every hypergraph $\cF$ with at least one edge, there is a constant $c_{\cF}$ and hypergraphs $\cG$ of arbitrarily large chromatic number such that every $\cF$-free induced subhypergraph of $\cG$ is strongly $c_{\cF}$-colourable. Moreover, we can take $\cG$ such that if $\cG$ covers every pair from some $X \subseteq V(\cG)$, then $\cG[X]$ is an induced subhypergraph of~$\cF$.
\end{restatable}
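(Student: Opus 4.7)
The plan is to build $\cG$ as a shift-type blow-up of $\cF$, generalising the iterated constructions of Carbonero--Hompe--Moore--Spirkl~\cite{CHMS22} and Bria\'{n}ski--Davies--Walczak~\cite{BDW22arxiv} from cliques to arbitrary hypergraphs. Fix a parameter $n$ and let $V(\cG_n) = V(\cF) \times C_n$, where $C_n$ is a rich ``context'' set (for instance, the increasing sequences in $[n]$ of a suitable length). Hyperedges of $\cG_n$ are faithful copies of edges of $\cF$: for each $e \in E(\cF)$ we include the hyperedge $\{(v, c_v) : v \in e\}$ whenever the tuple $(c_v)_{v \in e}$ satisfies a shift-compatibility condition on $C_n$ chosen so that contexts appearing together in an edge are pairwise comparable. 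The projection $\pi \colon (v,c) \mapsto v$ is the central structural map.

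I would first verify the moreover clause, since it drives the design. If every pair in $X \subseteq V(\cG_n)$ is covered by a hyperedge, then faithfulness of $\pi$ on each covering hyperedge forces $\pi|_X$ to be injective, while the shift compatibility forces the contexts of vertices in $X$ to form a chain. These two facts together allow one to recover every edge of $\cF[\pi(X)]$ as an edge of $\cG_n[X]$, yielding $\cG_n[X] \cong \cF[\pi(X)]$ as required. To see $\chi(\cG_n) \to \infty$, fix any edge $e \in E(\cF)$ and restrict a putative proper colouring of $\cG_n$ to $\pi^{-1}(e)$; the result is a proper colouring of a Kierstead--Trotter-style shift hypergraph whose chromatic number grows with $n$.

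The heart of the argument is that every induced $\cF$-free subhypergraph $\cH = \cG_n[Y]$ is strongly $c_{\cF}$-colourable. Here $\pi|_Y$ is the natural candidate: if $\pi|_Y$ fails to be a strong colouring then some hyperedge of $\cH$ contains two vertices with equal $\pi$-image, which combined with the shift structure forces $Y$ to contain a full induced copy of $\cF$, contradicting $\cF$-freeness. A mild refinement, recording bounded local information about each context alongside $\pi$, yields a strong colouring using $c_{\cF}$ colours depending only on $\cF$. The main obstacle will be calibrating the shift-compatibility condition so that three competing demands are met simultaneously: loose enough to allow a rich enough family of edges (for large $\chi(\cG_n)$), tight enough to force pairwise-covered sets into the rigid structure above (for the moreover clause), and compatible with the projection argument (for the strong colouring bound). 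By analogy with~\cite{BDW22arxiv}, the right notion is likely a partial-order condition on contexts indexed by the edge structure of $\cF$.
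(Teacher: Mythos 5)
There is a fatal gap in the chromatic-number argument that stems from the overall shape of your construction. You take $V(\cG_n) = V(\cF)\times C_n$ and insist that \emph{every} hyperedge of $\cG_n$ is a faithful copy of some edge of $\cF$, i.e.\ of the form $\{(v,c_v):v\in e\}$ for some $e\in E(\cF)$. But then the projection $\pi\colon(v,c)\mapsto v$ is injective on every hyperedge, so colouring each vertex by its $\pi$-image is in fact a \emph{strong} $|V(\cF)|$-colouring of $\cG_n$: every edge receives $|e|$ distinct colours. In particular $\chi(\cG_n)\leq|V(\cF)|$ for all $n$, so the construction cannot have arbitrarily large chromatic number. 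Your claim that restricting a proper colouring to $\pi^{-1}(e)$ yields a Kierstead--Trotter-type hypergraph of growing chromatic number cannot be right, because all edges of $\cG_n$ contained in $\pi^{-1}(e)$ are still $\pi$-faithful and hence the map $\pi$ already properly (indeed strongly) colours $\cG_n[\pi^{-1}(e)]$ with at most $|e|$ colours, independently of $n$. This is not a calibration issue with the shift condition --- no choice of shift condition can save a construction in which every edge is $\pi$-faithful.

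The paper's construction sidesteps exactly this problem: the hypergraph $\cG$ is built on a base hypergraph $\cY_n$ (an oriented Ne\v{s}et\v{r}il--R\"odl hypergraph of large chromatic number and large girth) and then \emph{additional} hyperedges are superimposed, with a vertex of $\cG$ allowed to lie in the ``wrong'' residue class for $\cF$ --- there is no global projection onto $V(\cF)$. The high chromatic number is inherited from $\cY_n$ (whose edges are not $\cF$-faithful), while the encoding of $\cF$ comes from the new hyperedges, whose presence is governed by distances mod $p$ in the reachability order together with a $B_3$-set placement of $\cF$. The tension between ``large chromatic number'' and ``bounded strong chromatic number of $\cF$-free induced subhypergraphs'' is then resolved not by a single colouring but by the path-length argument of Proposition~\ref{prop:boundedpaths} applied separately to each residue class. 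If you want to salvage a blow-up-style approach, you would at minimum need to add a rich family of non-$\pi$-faithful edges (a high-chromatic base structure) and then re-derive the moreover clause and the strong colouring bound in the presence of those edges; at that point you are essentially back to the paper's two-layer construction.
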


It follows from the moreover part of the theorem that the edge sizes which appear in $\cG$ are the same as those which appear in $\cF$, and that for all $k$ the maximum size of a $k$-uniform clique in $\cG$ is the same as that in $\cF$. In particular, this theorem contains Theorem~\ref{thm:main}.

Our constructions for Theorems~\ref{thm:main}, \ref{thm:oddgirth} and~\ref{thm:hypergraphs} all use a common framework: we start with a base structure of large chromatic number and add edges according to `distances' in the base structure. Using generalised Sidon sets, we are able to choose these `distances' to encode $F$ or $\cF$ and give precise control over where its copies appear. 

The rest of this paper is structured as follows. In Section~\ref{sec:base} we describe the base graphs used in the proofs of Theorems~\ref{thm:main} and~\ref{thm:oddgirth}, then we complete the proofs of these theorems in Sections~\ref{sec:mainproof} and~\ref{sec:oddgirth} respectively. Section~\ref{sec:tournaments} is concerned with tournaments and contains the proof of Theorem~\ref{thm:trnmt}, then in Section~\ref{sec:hypergraphs} we turn our attention to hypergraphs and prove Theorem~\ref{thm:hypergraphs}. Our results on the durability of infinite families of graphs are covered in Section~\ref{sec:infinite}, before we close with a discussion of some open problems in Section~\ref{sec:conclusion}.

\textbf{Note added.} Since the original version of this paper was made available, Bria\'nski, Davies and Walczak have independently proved a strengthening of their Theorem~\ref{thm:bdw}, improving the bound on $\chi(H)$ and removing the restriction that $p$ is prime.  This can be found in the updated version of their paper~\cite{BDW23}.

\section{Orientations of classical constructions}\label{sec:base}

Our constructions for Theorems~\ref{thm:main} and~\ref{thm:oddgirth} start with orientations of graphs with high chromatic number and restricted clique number or girth. Traditionally the underlying graphs in such orientations have been the well known Zykov graphs~\cite{Z49}. Orientations of these were first considered by Kierstead and Trotter~\cite{KT92}, and then used in the constructions of 
both Carbonero, Hompe, Moore and Spirkl~\cite{CHMS22} and
Bria\'{n}ski, Davies and Walczak~\cite{BDW22arxiv}. For the proof of Theorem~\ref{thm:main}, we require precisely the following properties of the underlying graphs $X_n$ and their orientations $\vv{X_n}$:
\begin{enumerate}[label=(\alph*)]
    \item $\chi(X_n) = n$,
    \item $\vv{X_n}$ is acyclic, that is, it contains no directed cycle, and
    \item for all $u, v\in V(\vv{X_n})$, there is at most one directed path from $u$ to $v$ in $\vv{X_n}$.
\end{enumerate}
The Zykov graphs and their orientations given by Kierstead and Trotter satisfy these conditions and so suffice for the proof of Theorem~\ref{thm:main}. 
In fact   any $X_n$ with chromatic number $n$ and girth greater than $2n - 2$ has some orientation $\vv{X_n}$ satisfying these three properties (see Lemma~\ref{lem:digraphexistence}).

For the proof of Theorem~\ref{thm:oddgirth} we require one further property in order to rule out short odd cycles (in particular, we cannot take Zykov graphs as the base graphs in the proof of Theorem~\ref{thm:oddgirth}).  Define a \defn{change of direction} in a digraph to be a pair of edges incident to a common vertex that are both directed towards or both directed away from that vertex. For a given positive integer $g$ we ask that $\vv{X_n}$ satisfies:

\begin{enumerate}[label=(\alph*), resume]
    \item for every cycle in $X_n$, the corresponding oriented cycle in $\vv{X_n}$ has at least $g$ changes of direction.
\end{enumerate}
Strictly speaking, $\vv{X_n}$ and condition (d) depend on the choice of positive integer $g$, but we suppress this dependence in the notation. Note that if $g\geq 3$, then property (d) implies both (b) and (c). Lemma~\ref{lem:digraphexistence} below states that any $X_n$ with chromatic number $n$ and girth greater than $(g - 1)(n - 1)$ where $g \geq 3$ has an orientation $\vv{X_n}$ satisfying all four properties.

By property (c), it is natural to view $V(\vv{X_n}) = V(X_n)$ as a poset with a strict partial order based on \defn{reachability}: for distinct vertices $u$ and $v$, we write $u < v$ if there is a directed path from $u$ to $v$ in $\vv{X_n}$. Note that there is a correspondence between chains in this poset and sets of vertices that lie on a single directed path in $\vv{X_n}$. For vertices $u < v$ we can define the \defn{distance} between $u$ and $v$, written $d(u,v)$,
to be the length of the unique directed path from $u$ to~$v$. The uniqueness of directed paths ensures that $d(u,w)=d(u,v)+d(v,w)$ for vertices $u < v < w$.

At this stage, the reader may like to proceed directly to Sections~\ref{sec:mainproof} and \ref{sec:oddgirth} for the proofs of our main theorems, keeping in mind that there exist (di)graphs with the required properties. We continue in this section with the promised constructions. 

The following proposition, which is the acyclic case of the Gallai-Roy-Hasse-Vivater theorem, provides a strong relationship between orientations and chromatic number.

\begin{proposition}\label{prop:boundedpaths}
    A graph $L$ is $k$-colourable if and only if it has an acyclic orientation $\vv{L}$ which contains no directed paths of length $k$. 
\end{proposition}

\begin{proof}
    First suppose that $L$ is $k$-colourable and let $c \colon V(L) \to \{1, \dotsc, k\}$ be a proper $k$-colouring. For each edge $e = uv$ of $L$, if $c(u) < c(v)$, then orient $e$ from $u$ to $v$ and if $c(v) < c(u)$, then orient $e$ from $v$ to $u$. For any directed path $u_1 u_2 \dotsc u_t$ in the resulting orientation $\vv{L}$, we have $1 \leq c(u_1) < c(u_2) < \dotsb < c(u_t) \leq k$ and so the orientation is acyclic and the path has length at most $k - 1$.
    
    On the other hand, if the acyclic orientation $\vv{L}$ has no directed paths of length $k$, then we may simply colour each vertex $v$ of $L$ by the length of the longest directed path in $\vv{L}$ ending at $v$.
\end{proof}

Erd\H{o}s~\cite{Erdos59highgirth} showed that for any integers $g, n \geq 2$ there are graphs of chromatic number $n$ and girth at least $g$. This together with the following lemma (pointed out to us by Jarik Ne\v{s}et\v{r}il) shows that there are indeed digraphs with properties (a) to (d).

\begin{lemma}\label{lem:digraphexistence}
    Let $g \geqslant 3$ and $n \geqslant 2$ be integers. If a graph $X_n$ has chromatic number $n$ and girth greater than $(g - 1)(n - 1)$, then $X_n$ has an orientation $\vv{X_n}$ satisfying properties \textnormal{(a)}, \textnormal{(b)}, \textnormal{(c)} and \textnormal{(d)}.
\end{lemma}

\begin{proof}
    The graph $X_n$ has chromatic number $n$ so, by Proposition~\ref{prop:boundedpaths}, has an acyclic orientation $\vv{X_n}$ which contains no directed paths of length $n$. In particular, $\vv{X_n}$ automatically satisfies properties (a) and (b). Consider a cycle $C$ in $X_n$. The length of $C$ is greater than $(g - 1)(n - 1)$ and every directed path in $\vv{X_n}$ has length at most $n - 1$, so $C$ has at least $g$ changes of direction in $\vv{X_n}$. Hence property (d) is satisfied. Finally, as $g \geq 3$, this implies that $\vv{X_n}$ satisfies property (c) also.
\end{proof}

\section{Clique number}\label{sec:mainproof}

In this section, we will give the proof of Theorem \ref{thm:main}.  
We have not tried hard to optimise the bounds; however, writing $f = \abs{V(F)}$, the proof gives\footnote{Using results from the updated version of Bria\'nski, Davies and Walczak's paper~\cite{BDW23} (see the note at the end of the introduction), this can be improved to $c_F=O(f^9)$ by noting that the graph $G$ we construct in Section~\ref{sec:construction} can be taken to be a subgraph of their $G_{n,p}$.}
\begin{equation}\label{eq:bounds}
    \begin{aligned}
    c_F & \leq (32f^4)^{\abs{E(F)}}, \\
    c_F & \leq \big((2 + o(1))f^4\big)^{\abs{E(F)}}.
    \end{aligned}
\end{equation}
We begin by sketching the proof strategy, and then discuss $B_3$-sets in Section~\ref{sec:B3}, before giving the details of the proof in Section~\ref{sec:construction}.

The constructions in~\cite{BDW22arxiv,CHMS22, KT92} start with oriented (Zykov) graphs $(\vv{X_n})$ satisfying properties (a) through (d), and add new edges $\vv{uv}$ (or sometimes $\vv{vu}$ in~\cite{CHMS22}) between vertices $u < v$ whenever $p$ does not divide $d(u,v)$ for some prime $p$. These papers use arbitrary primes $p$, $p=3$ and $p=2$ respectively. Our construction is similar in that, for some prime $p$, we add edges to $\vv{X_n}$ based on the residues modulo $p$ of the distances between endpoints. However, the set of residues for which we add edges is now more sophisticated.
Let $\vv{G}$ be the digraph so obtained, and let $G$ be its underlying undirected graph. As $X_n$ is a subgraph of $G$, we know that $\chi(G) \geq n$. The extra edges will be added so that the following properties hold:
\begin{itemize}[noitemsep]
    \item $\omega(G) = \omega(F)$, 
    \item $\vv{G}$ is acyclic, and
    \item if there is a long (depending only on $F$) directed path in $\vv{G}$ all of whose edges correspond to the same distance modulo $p$, then some vertices of this path induce a copy of $F$ in $G$.
\end{itemize}
The second and third properties are particularly useful in light of Proposition~\ref{prop:boundedpaths}.

Now taking $G$ as above, suppose that $H$ is an induced subgraph of $G$ which does not contain $F$ as an induced subgraph. Let $H_i$ be the subgraph of $H$ which consists of those edges corresponding to distance $i$ modulo $p$. By the third bullet point, $H_i$ cannot contain a long directed path. Thus, Proposition~\ref{prop:boundedpaths} and the second bullet point imply that $H_i$ has chromatic number bounded in terms of $F$. Taking a product colouring (over the possible $i$) will then show that $H$ itself must have chromatic number at most some $c_F$.

\subsection{\texorpdfstring{$B_3$}{B3}-sets}\label{sec:B3}
To guarantee that $G$ has the same clique number as $F$ our construction will utilise $B_3$-sets. A set of integers $S=\{a_1 < a_2 < \dotsb < a_k\}$ is a \defn{$B_3$-set} if the sums
\begin{equation*}
    a_{i_1}+a_{i_2}+ a_{i_3}, \qquad 1\leq i_1 \leq i_2 \leq i_3 \leq k
\end{equation*}
are all different. A simple example of a $B_3$-set is the set of powers of four --- it would suffice to use an initial segment of this for our arguments, although the resulting bound on $c_F$ would be much worse.
$B_3$-sets (and more generally $B_h$-sets, which we discuss in Section~\ref{sec:Bh}) were introduced by Bose and Chowla~\cite{BC62} as a generalisation of Sidon sets~\cite{S32}, which are precisely the $B_2$-sets (i.e.\ sets of integers where all pairwise differences are distinct). 
In particular, every $B_3$-set is also a $B_2$-set. Let $S$ be a $B_3$-set and let
\begin{equation*}
    D = \{s - s' \colon s' < s \text{ and } s, s' \in S\}
\end{equation*}
be the \defn{difference set of $S$}.
The sets $S$ and $D$ satisfy the following two claims which motivate our interest in $B_3$-sets.
\begin{claim}[Triangle fact]\label{claim:triangle}
    Suppose that $D$ contains not necessarily distinct $d_1, d_2$ such that $d_1 + d_2 \in D$. Then there are $b_1 < b_2 < b_3$ all in $S$ such that $\{b_2-b_1,b_3-b_2\}=\{d_1,d_2\}$.
\end{claim}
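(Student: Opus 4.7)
The plan is a direct computation relying on the defining property of a $B_3$-set, together with a brief case analysis.

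First I would unpack the hypotheses. Since $d_1, d_2, d_1 + d_2 \in D$, pick witnesses: write $d_1 = a - a'$, $d_2 = c - c'$, and $d_1 + d_2 = e - e'$, where $a, a', c, c', e, e' \in S$ with $a' < a$, $c' < c$, and $e' < e$. Rearranging the identity $(a - a') + (c - c') = e - e'$ gives
\begin{equation*}
    a + c + e' = a' + c' + e.
\end{equation*}
Both sides are sums of three (not necessarily distinct) elements of $S$, so the $B_3$ property forces the multisets $\{a, c, e'\}$ and $\{a', c', e\}$ to be equal.

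Next I would identify the three elements producing the claimed pair of differences. Since $e > e'$, the element $e$ cannot be matched with $e'$ in this multiset equality, so either $e = a$ or $e = c$. By swapping the roles of $(d_1, a, a')$ and $(d_2, c, c')$ if necessary (this swap preserves the set $\{d_1, d_2\}$), I may assume $e = a$. The multiset equality then reduces to $\{c, e'\} = \{a', c'\}$, and since $c > c'$ we must have $c = a'$ and $e' = c'$. Setting $b_1 = e'$, $b_2 = a' = c$, and $b_3 = a = e$ gives elements of $S$ with $b_1 < b_2 < b_3$ (from $e' = c' < c = b_2$ and $a' < a$) and consecutive differences $b_2 - b_1 = c - c' = d_2$ and $b_3 - b_2 = a - a' = d_1$, as required.

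The main obstacle is really just careful bookkeeping: one has to treat the multisets with multiplicities (covering cases such as $a = c$ or $d_1 = d_2$) and verify that the three elements picked out are genuinely distinct so that the strict inequalities $b_1 < b_2 < b_3$ hold. Both points follow readily from the three strict inequalities $e' < e$, $a' < a$ and $c' < c$, so the argument goes through uniformly with no further machinery needed.
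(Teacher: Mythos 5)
Your proof is correct and follows essentially the same route as the paper: introduce witnesses for $d_1$, $d_2$, $d_1+d_2$, rearrange to get an equality of two three-term sums, invoke the $B_3$ property to equate the multisets, and finish by a short case analysis (your WLOG swap of $d_1,d_2$ replaces the paper's two-case split, but the content is identical). No gaps.
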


\begin{proof}
    Suppose that $d_1, d_2, d_1 + d_2$ are all in $D$. Then there are $x_1, x_2, x_3, y_1, y_2, y_3$ all in $S$ with $d_1 = y_1 - x_1$, $d_2 = y_2 - x_2$ and $d_1 + d_2 = y_3 - x_3$. Hence,
    \begin{align*}
        (y_1 - x_1) + (y_2 - x_2) & = y_3 - x_3, \\
        \Rightarrow y_1 + y_2 + x_3 & = x_1 + x_2 + y_3.
    \end{align*}
    But $S$ is a $B_3$-set, so $y_1, y_2, x_3$ must be $x_1, x_2, y_3$ in some order. The elements of $D$ are positive integers so $x_i \neq y_i$ for all $i$. Hence $y_1$ is either $x_2$ or $y_3$.
    \begin{itemize}[noitemsep]
        \item If $y_1 = x_2$, then $b_1 = x_1$, $b_2 = y_1 = x_2$, and $b_3 = y_2$ satisfy $(b_2 - b_1, b_3 - b_2)  = (d_1, d_2)$.
        \item If $y_1 = y_3$, then $y_2 = x_1$ (as $y_2 \neq x_2$) and so $b_1 = x_2$, $b_2 = x_1 = y_2$, and $b_3 = y_1$ satisfy $(b_2 - b_1, b_3 - b_2)  = (d_2, d_1)$.\qedhere
    \end{itemize}
\end{proof}

The preceding claim can be leveraged to cover more distances.

\begin{claim}[Clique fact]\label{claim:clique}
Let $\ell$ be a natural number and suppose that $D$ contains not necessarily distinct $d_1,\dots,d_\ell$ such that every sum of the form 
\begin{equation}\label{eq:sums}
    \sum_{i_1\leq j \leq i_2}d_j
\end{equation}
with $1\leq i_1 \leq i_2\leq \ell$ is in $D$. Then there are $b_1<\dotsb <b_{\ell+1}$ all in $S$ such that either $b_{i + 1} - b_i = d_i$ for all $i$, or $b_{i + 1} - b_i = d_{\ell+1-i}$ for all $i$.
\end{claim}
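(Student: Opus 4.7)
My plan is to proceed by induction on $\ell$, using Claim~\ref{claim:triangle} (the Triangle fact) as the engine and the fact that $S$ is in particular a $B_2$-set---so each element of $D$ is realised by a \emph{unique} ordered pair in $S$---as the main lever. Write $\sigma_{i,j} := d_i + d_{i+1} + \dotsb + d_j$ for the interval sum. The cases $\ell = 1$ (immediate from $d_1 \in D$) and $\ell = 2$ (exactly the Triangle fact) serve as base cases.

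For the inductive step with $\ell \geq 3$, the first move is to apply the Triangle fact to the triple $(d_1, \sigma_{2,\ell}, \sigma_{1,\ell})$, each of which is in $D$. This yields $a < b < c$ in $S$ with $c - a = \sigma_{1,\ell}$ and $\{b - a, c - b\} = \{d_1, \sigma_{2,\ell}\}$; Sidon uniqueness pins the outer pair $(a, c)$. Two cases arise: Case~I with $b - a = d_1$, which I will show forces the forward sequence, and Case~II with $b - a = \sigma_{2,\ell}$, which forces the reverse. The two cases are symmetric under reversal of $(d_1, \dotsc, d_\ell)$, so I focus on Case~I.

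In Case~I, I apply the inductive hypothesis to $(d_2, \dotsc, d_\ell)$, whose interval sums lie among those of the original sequence, to obtain $e_1 < \dotsb < e_\ell$ in $S$ realising these in forward or reverse order. Since $e_\ell - e_1 = \sigma_{2,\ell} = c - b$, uniqueness forces $(e_1, e_\ell) = (b, c)$. In the forward sub-case, prepending $a$ immediately delivers a length-$(\ell+1)$ forward sequence. The obstacle is the reverse sub-case, where $e_j = b + \sigma_{\ell+2-j,\ell}$ lies inside $[b,c]$ in the wrong order. To finish here, I aim to show $a + \sigma_{1,k} \in S$ for every $k \in \{1, \dotsc, \ell\}$ (the endpoints $k = 1, \ell$ are automatic since $b, c \in S$). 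For each intermediate $k$ I apply the Triangle fact to $(\sigma_{1,k}, \sigma_{k+1,\ell}, \sigma_{1,\ell})$: the outer pair is again pinned to $(a, c)$, and the middle element is either $a + \sigma_{1,k}$ (good) or $a + \sigma_{k+1,\ell}$ (bad). In the bad branch, $(a, a + \sigma_{k+1,\ell})$ and $(b, e_{\ell-k+1}) = (b, b + \sigma_{k+1,\ell})$ would be two distinct ordered pairs in $S$ realising $\sigma_{k+1,\ell}$, violating Sidon uniqueness. So the good branch always holds, and the elements $a, a + d_1, a + \sigma_{1,2}, \dotsc, c$ assemble the desired forward sequence.

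The main difficulty is exactly this direction mismatch: the inductive hypothesis can hand us an auxiliary sequence pointing the wrong way, so direct concatenation does not produce the required ordering. The resolution is to play the reverse-$e$ elements against further Triangle facts, using Sidon uniqueness to kill each unwanted branch. Case~II is handled symmetrically, applying the Triangle fact to $(\sigma_{1,k-1}, \sigma_{k,\ell}, \sigma_{1,\ell})$ for $k \in \{3, \dotsc, \ell\}$; the bad branch there produces two distinct pairs realising $d_1$ (one is $(b, c)$, the other built from a forward-$e$ element), again contradicting Sidon.
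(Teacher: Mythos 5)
Your proof is correct, and while it shares the paper's basic framework (induction on $\ell$, base cases $\ell = 1,2$, the triangle fact plus Sidon uniqueness as the engine), the inductive decomposition is genuinely different. The paper applies the inductive hypothesis directly to the prefix $(d_1,\dots,d_{\ell-1})$, assumes WLOG a forward chain $b_1 < \dots < b_\ell$, and then tries to append a single new vertex at the end by applying the triangle fact just twice --- once to the pair $(d_{\ell-1}, d_\ell)$ and once to the pair $(d_1+\dots+d_{\ell-1}, d_\ell)$ --- deriving a Sidon contradiction from the statement that ``at most two elements of $S$ can be associated with a given difference'' if both attempts fail. You instead first apply the triangle fact to $(d_1, \sigma_{2,\ell})$ to pin down the anchor triple $a < b < c$, then run the inductive hypothesis on the suffix $(d_2,\dots,d_\ell)$ between the appropriate pair of anchors, and when the induction hands back a chain with the wrong orientation you rebuild the entire target chain from $a$ using roughly $\ell$ further triangle-fact applications, killing each unwanted branch with a Sidon collision against one of the inductively produced $e_i$. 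Both routes work; the paper's is tighter (a constant number of triangle-fact applications per step and no need to rebuild) while yours has a conceptually clean ``pin the endpoints, then interpolate'' structure. One small remark on presentation: your claim that Case II is ``symmetric under reversal'' is a bit loose (reversing the sequence changes which pair you initially apply the triangle fact to), but since you then sketch Case II explicitly and that sketch does check out --- the Sidon collision there being between $(c,b)$ and $(m_k, e_{k-1})$ both realising $d_1$ --- the argument goes through.
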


\begin{proof}
We proceed by induction on $\ell$. The case $\ell = 1$ is trivial and the case $\ell = 2$ is the triangle fact, so suppose $\ell \geq 3$. By the induction hypothesis, there are $b_1<\dots<b_\ell$ such that either $b_{i + 1} - b_i = d_i$ for all $i<\ell$, or $b_{i + 1} - b_i = d_{\ell-i}$ for all $i<
\ell$. 
We may assume that $b_{i + 1} - b_i = d_i$ for all $i<\ell$, as the other case follows by a symmetric argument.

Let us say that a difference $d \in D$ is associated with $b\in S$ if there is some $c\in S$ such that $\abs{b - c}=d$. As $S$ is a Sidon set, $d_{\ell - 1}$ is only associated with $b_{\ell - 1}$ and  $b_\ell$. Thus, applying the triangle fact to $d_{\ell - 1}$ and $d_\ell$ we find that there is $b \in S$ such that either $b_{\ell - 1} - b = d_\ell$ or $b - b_\ell = d_\ell$. In the latter case we are done, so assume that $b_{\ell - 1} - b = d_\ell$.

Similarly, $d \coloneqq \sum_{j = 1}^{\ell - 1} d_j$ is only associated with $b_1$ and $b_\ell$, so by applying the triangle fact to $d$ and $d_\ell$ we find that there is some $b' \in S$ such that either $b_1 - b' = d_{\ell}$ or $b' - b_{\ell} = d_{\ell}$. In the latter case we are done so we assume that $b_1 - b' = d_{\ell}$. But then $b' < b_1 < b_{\ell - 1}$ are all associated with $d_{\ell}$ while $S$ is a Sidon set. This is a contradiction, as required.
\end{proof}

The existence of large $B_3$-sets is guaranteed by the following theorem of Bose and Chowla~\cite[Thm.~2]{BC62}.
We use the standard shorthand $[n] = \{1,\dotsc,n\}$.

\begin{theorem}[Bose and Chowla~\cite{BC62}]\label{thm:BoseChowla}
    Let $q$ be one more than a prime power. There is a $B_3$-subset of $[q^3]$ of size $q$ that contains both $1$ and $2$.
\end{theorem}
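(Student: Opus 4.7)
The plan is to apply the classical projective-line construction of Bose and Chowla. Let $P = q - 1$ be the prime power, and work in the field $K = \mathbb{F}_{P^4}$, regarded as a degree-$4$ extension of $\mathbb{F}_P$. I would fix a primitive element $\theta$ of $K$, so that $\theta$ has minimal polynomial of degree exactly $4$ over $\mathbb{F}_P$ and the quotient $K^{\times}/\mathbb{F}_P^{\times}$ is cyclic of order $N = (P^4 - 1)/(P - 1) = P^3 + P^2 + P + 1 \leq q^3$. For each point $[a : b]$ of the projective line $\mathbb{P}^1(\mathbb{F}_P)$ (of which there are $P + 1 = q$), the coset of $a + b\theta$ in $K^{\times}/\mathbb{F}_P^{\times}$ defines a residue $e_{[a:b]} \in \mathbb{Z}/N\mathbb{Z}$. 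Collect these into a candidate set $S$ of size $q$; distinctness follows from the fact that $\theta \notin \mathbb{F}_P$, so $a + b\theta = c(a' + b'\theta)$ with $c \in \mathbb{F}_P^{\times}$ forces $[a:b] = [a':b']$.

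The central algebraic step is to show that $S$ is a $B_3$-set modulo $N$. Suppose $e_{[a_1:b_1]} + e_{[a_2:b_2]} + e_{[a_3:b_3]} \equiv e_{[a'_1:b'_1]} + e_{[a'_2:b'_2]} + e_{[a'_3:b'_3]} \pmod N$. Lifting to $K$ yields $\prod_i (a_i + b_i \theta) = c\prod_j (a'_j + b'_j \theta)$ for some $c \in \mathbb{F}_P^{\times}$. Both sides are polynomials in $\theta$ over $\mathbb{F}_P$ of degree at most $3$, so their difference is a polynomial of degree at most $3$ vanishing at $\theta$; since the minimal polynomial of $\theta$ has degree $4$, this difference must be identically zero. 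Hence $\prod_i (a_i + b_i X) = c\prod_j (a'_j + b'_j X)$ as polynomials in $\mathbb{F}_P[X]$, and unique factorisation (matching $\mathbb{F}_P$-linear factors up to scaling) forces equality of the two multisets in $\mathbb{P}^1(\mathbb{F}_P)$.

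The modular $B_3$-property immediately lifts to the integer $B_3$-property once $S$ is realised as a subset of $\{0, 1, \dotsc, N - 1\} \subseteq [q^3]$, since any integer identity among threefold sums implies the corresponding modular one. The final task is to arrange that both $1$ and $2$ lie in the set. I would handle this by applying an affine transformation $x \mapsto \lambda x + \mu$ modulo $N$ with $\lambda$ coprime to $N$; such maps preserve the $B_3$-property. Choosing a pair $s < s'$ in $S$ whose difference is invertible modulo $N = (P+1)(P^2+1)$ allows one to take $\lambda = (s' - s)^{-1}$ and $\mu = 1 - \lambda s$, sending $s \mapsto 1$ and $s' \mapsto 2$. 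The main obstacle I anticipate is verifying the existence of such a pair, which I expect to follow from a short counting argument: $S$ contributes $\binom{q}{2}$ distinct differences (as $B_3$ implies Sidon), while the non-units modulo $N$ are sparse enough that at least one difference must be a unit.
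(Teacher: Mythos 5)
The paper cites this result from Bose and Chowla without proof, so the only question is whether your argument is correct. Your construction and the core $B_3$ verification are right and essentially the classical one: taking $K = \mathbb{F}_{P^4}$, mapping $[a:b] \in \mathbb{P}^1(\mathbb{F}_P)$ to the discrete logarithm of $[a+b\theta]$ in the cyclic group $K^\times/\mathbb{F}_P^\times$ of order $N = P^3+P^2+P+1 < q^3$, and using the degree-$4$ minimal polynomial of $\theta$ together with unique factorisation in $\mathbb{F}_P[X]$ to establish the modular $B_3$ property, which then descends to integers. This all works.

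The gap is in the last step, and it is exactly the one you flagged. The counting argument does not go through: $S$ has only $\binom{q}{2} \approx P^2/2$ distinct positive differences, while the number of non-units in $\{1,\dotsc,N-1\}$, $N = (P+1)(P^2+1) \approx P^3$, can easily exceed $P^2/2$ (even a single small prime factor of $N$ contributes about $N/p$ non-units, which is of order $P^3$). So you cannot conclude by volume that some difference is a unit. Fortunately you do not need the affine map at all. Take the generator of the cyclic quotient $K^\times/\mathbb{F}_P^\times$ to be $[\theta]$ itself. Then $[1:0]$ corresponds to the identity coset $[1]$, giving residue $0$, and $[0:1]$ corresponds to $[\theta] = [\theta]^1$, giving residue $1$. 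Thus $\{0,1\} \subseteq S$ automatically. Translating the whole set by $+1$ modulo $N$ (which, as you note, preserves the modular $B_3$ property) and taking representatives in $\{1,\dotsc,N\} \subseteq [q^3]$ yields a $B_3$-set of size $q$ containing both $1$ and $2$. This replaces the affine-transformation step entirely and closes the gap.
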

Given a natural number $f$, using the well known fact that the gap between consecutive primes $q < q'$ is $o(q)$ (see, for instance,~\cite{BHP01} for an up-to-date bound), Theorem~\ref{thm:BoseChowla} implies that for $N = (1 + o(1))f^3$, there is a $B_3$-subset of $[N]$ of size at least $f$ that contains both $1$ and $2$. Bertrand's postulate gives a non-asymptotic bound: the smallest prime $p = q - 1$ greater than $f - 1$ is less than $2f - 1$ and so Theorem~\ref{thm:BoseChowla} gives a $B_3$-subset of $[q^3] \subseteq [8f^3]$ of size at least $f$ that contains both $1$ and $2$.

\subsection{Proof of Theorem~\ref{thm:main}}\label{sec:construction}

Fix a graph $F$ with at least one edge and let $f = \lvert V(F) \rvert$. Let $p$ be a prime number chosen to be large enough that $\big[(p - 1)/2\big]$ has a $B_3$-subset $S$ of size $f$ that contains $1$ and $2$. By the discussion at the end of Section~\ref{sec:B3}, it suffices to have $(p - 1)/2 \geq \min\bigl\{(1+o(1))f^3, 8f^3\bigr\}$. Using the result on prime gaps and Bertrand's postulate we may choose such a $p$ with $p\leq \min\bigl\{(2 + o(1))f^3,32f^3\bigr\}$.

Let the elements of $S$ be $s_1 = 1, s_2 = 2, s_3,\dotsc, s_f$ in ascending order and let the difference set of $S$ be $D = \{s_j - s_i \colon i < j\}$. As $F$ has at least one edge we may take a copy $F^{\ast}$ of $F$ with vertex set $S$ which includes the edge $s_1s_2$. We write
\begin{equation*}
    E = \{s_j - s_i \colon i < j \text{ and } s_i s_j \in E(F^{\ast})\}
\end{equation*}
for the set of distances corresponding to edges of $F^{\ast}$. Since $s_1 = 1$ and $s_2 = 2$, we have $1 \in E$. Note that $E \subseteq D$. Let $\ov{D} = D + p\Z$ and $\ov{E} = E + p\Z$.

For a fixed large integer $n$, recall the (di)graphs $\vv{X_n}$ and $X_n$ from Section~\ref{sec:base}, and that for vertices $u < v$ (in the reachability ordering) of these graphs we write $d(u, v)$ for the length of the unique directed path from $u$ to $v$ in $\vv{X_n}$. We now define a digraph $\vv{G}$ with vertex set $V(\vv{X_n})$ as follows.
\begin{center}
    The edge $\vv{uv}$ is present if $u < v$ and $d(u, v) \in \ov{E}$.
\end{center}
The digraph $\vv{G}$ is acyclic, since the presence of the edge $\vv{uv}$ implies that $u < v$ in the reachability order. Let $G$ be the underlying undirected graph of $\vv{G}$. 

We now verify that $G$ satisfies the conditions of Theorem~\ref{thm:main}. Since $1 \in E$, the graph $X_n$ is a subgraph of $G$ and so $\chi(G) \geq n$. Next we show that $\omega(G)\leq \omega(F)$. To this end, suppose that vertices $v_1,\dots,v_{\ell+1}$ form a clique in $G$. Since any two of these vertices are $<$-comparable, we may assume that $v_1<\dotsb<v_{\ell+1}$. For $i\in [\ell]$ let $d_i\in\{0,\dots,p-1\}$ be the residue modulo $p$ of $d(v_i,v_{i+1})$, and note that
\begin{equation*}
    d(v_{i_1}, v_{i_2+1}) \equiv \sum_{i_1\leq j\leq i_2} d_j \bmod{p}
\end{equation*}
for all $1\leq i_1\leq i_2\leq \ell$. Since $v_1,\dots,v_{\ell+1}$ form a clique in $G$, all these distances $d(v_{i_1}, v_{i_2+1})$, and hence all sums of the form in~\eqref{eq:sums}, are in $\ov{E}\subseteq\ov{D}$. We claim that they are actually in $D$. It is clear that $d_j\in D$ for all $j$, and since $D\subseteq [(p-1)/2]$ we have that the two-term sums of the form in~\eqref{eq:sums} are in $D$ as well. Iterating this shows that all sums of the form in~\eqref{eq:sums} are in $D$. 

It now follows from the clique fact that there are $b_1 < \dotsb < b_{\ell+1}$ all in $S$ such that either $b_{i + 1} - b_i = d_i$ for all $i$, or $b_{i + 1} - b_i = d_{\ell + 1 - i}$ for all $i$. All pairwise distances between elements of $B = \{b_1, \dotsc, b_{\ell+1}\}$ are of the form in \eqref{eq:sums} and so are in $\ov{E}$. But since $B \subseteq S$, they must all actually be in $E$. By the definition of $E$ and the fact that $S$ is a Sidon set, the vertices $b_1, \dotsc, b_{\ell+1}$ induce a clique in $F^{\ast}$ and so $\ell+1 \leq \omega(F)$, as required.

We now turn to the last required property of $G$, that all of its induced subgraphs of chromatic number greater than some $c_F$ contain an induced copy of $F$. Start by colouring each edge $uv$ of $G$ (and $\vv{G}$), where $u < v$, with the colour $d(u, v) \bmod{p}$. Let $\vv{P}$ be a directed path in $\vv{G}$ of length $pf$ which is monochromatic in colour $i$ for some $i \in E$. Label the first vertex of this path by $v_1$. Recall that the elements of $S$ are $s_1 < \dotsb < s_f$. Let $v_2$ be the first vertex on path $\vv{P}$ such that $d(v_1,v_2)\equiv s_2-s_1\bmod{p}$. Since $p$ is prime and $i\neq 0$, there are at most $p-1$ edges of $\vv{P}$ between $v_1$ and $v_2$ on this path. Next, define $v_3$ to be the first vertex of $\vv{P}$ such that $d(v_2,v_3) \equiv s_3-s_2\bmod{p}$, and continue in this manner to define $v_4,\dotsc,v_f$. Note that for each $i$ there are at most $p-1$ edges between $v_i$ and $v_{i + 1}$ on path $\vv{P}$, so the path is long enough that these vertices can be found. 

The way in which we have chosen the vertices $v_i$ means that, for each pair $i < j$, we have $d(v_i, v_j) \equiv s_j - s_i \bmod{p}$. Then, by the construction of $G$ and the fact that $S$ is a Sidon set, the edge $v_i v_j$ is present in the graph exactly when there is an edge between $s_i$ and $s_j$ in $F^{\ast}$. It follows that $G$ contains an induced copy of $F$ with vertex set $\{v_1,\dotsc,v_f\}$. 

Now let $H$ be an induced subgraph of $G$ which does not contain an induced copy of $F$, and let $\vv{H}$ be the subdigraph of $\vv{G}$ induced by the same set of vertices. For each $i \in E$, let $H_i$ be the subgraph of $H$ consisting of the $i$-coloured edges and define $\vv{H_i}$ similarly. By the above, $\vv{H_i}$ contains no directed paths of length $pf$ and hence $H_i$ is $pf$-colourable by Proposition~\ref{prop:boundedpaths}. Take such a proper colouring $\chi_i \colon V(H_i) \to [pf]$ for each $i$. Then the product colouring $\chi\colon V(H) \to [pf]^{\abs{E}}$ given by $\chi(v) = (\chi_i(v) \colon i\in E)$ is a proper colouring of $H$. Hence, we may take $c_F$ to be $(pf)^{\abs{E}}$. Finally, we note that $\abs{E} = \abs{E(F)}$ and recall that $p\leq \min\bigl\{(2 + o(1))f^3, 32f^3\bigr\}$ to obtain the bounds given by~\eqref{eq:bounds}.

\section{Odd girth}\label{sec:oddgirth}

The proof of Theorem~\ref{thm:oddgirth} shares some ingredients with that of Theorem~\ref{thm:main}. In this exposition, we will pay particular attention to the differences.

\subsection{\texorpdfstring{$B_h$}{Bh}-sets}
\label{sec:Bh}
Given a natural number $h \geq 2$, we can define the notion of $B_h$-sets analogously to $B_3$-sets. That is, a set of integers $S = \{a_1 < a_2 < \dotsb < a_k\}$ is a \defn{$B_h$-set} if the sums
\begin{equation*}
a_{i_1} + a_{i_2} + \dotsb + a_{i_h}, \qquad 1 \leq i_1 \leq i_2 \leq \dotsb \leq i_h \leq k
\end{equation*}
are all different. Note that a $B_h$-set is also a $B_{h'}$-set for any $h'<h$, and in particular is a Sidon set. The powers of $h$ form a $B_h$-set but Bose and Chowla~\cite[Thm.~2]{BC62} provide a more efficient construction.
\begin{theorem}[Bose and Chowla~\cite{BC62}]
    Let $h$ be a positive integer and $q$ be one more than a prime power. There is a $B_h$-subset of $[q^h]$ of size $q$ that contains both $1$ and $2$.
\end{theorem}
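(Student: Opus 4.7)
The plan is to use the classical finite-field construction of Bose and Chowla. Write $r = q - 1$ for the prime power, and work in the field $\mathbb{F}_{r^{h+1}}$, which contains $\mathbb{F}_r$ as a subfield. The multiplicative group $\mathbb{F}_{r^{h+1}}^{\ast}$ is cyclic of order $r^{h+1} - 1$ and its subgroup $\mathbb{F}_r^{\ast}$ has order $r - 1$, so the quotient $\mathbb{F}_{r^{h+1}}^{\ast} / \mathbb{F}_r^{\ast}$ is cyclic of order $N = (r^{h+1} - 1)/(r - 1) = 1 + r + \dotsb + r^h$. A binomial expansion shows $N \leq (r+1)^h = q^h$. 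Fix a primitive element $\theta \in \mathbb{F}_{r^{h+1}}^{\ast}$; its minimal polynomial over $\mathbb{F}_r$ has degree $h + 1$, so $\{1, \theta, \dotsc, \theta^h\}$ is an $\mathbb{F}_r$-basis. The candidate set $B \subseteq \mathbb{Z}/N\mathbb{Z}$ will consist of the residues corresponding to the $q = r + 1$ one-dimensional $\mathbb{F}_r$-subspaces of $V = \mathbb{F}_r \cdot 1 + \mathbb{F}_r \cdot \theta$: each such subspace consists of pairwise $\mathbb{F}_r^{\ast}$-multiples and so maps to a single residue under the discrete logarithm $\theta^i \mapsto i \bmod N$; thus $\abs{B} = q$.

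The key step will be to verify the $B_h$-property. Suppose $b_1 + \dotsb + b_h \equiv b'_1 + \dotsb + b'_h \pmod{N}$ for multisets in $B$. We pick nonzero representatives $\alpha_k, \alpha'_k \in V$ lying above each $b_k$ and $b'_k$; the congruence then lifts to an equation $\prod_k \alpha_k = c \prod_k \alpha'_k$ in $\mathbb{F}_{r^{h+1}}$ for some $c \in \mathbb{F}_r^{\ast}$, because the kernel of the discrete logarithm reduced modulo $N$ is precisely $\mathbb{F}_r^{\ast}$. Writing $\alpha_k = u_k + v_k \theta$ and $\alpha'_k = u'_k + v'_k \theta$ with coefficients in $\mathbb{F}_r$, both sides become polynomials in $\theta$ of degree at most $h$. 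Since $\theta$ has degree $h + 1$ over $\mathbb{F}_r$, this forces the polynomial identity $\prod_k (u_k + v_k x) = c \prod_k (u'_k + v'_k x)$ in $\mathbb{F}_r[x]$; unique factorisation then makes the multisets of linear factors (up to $\mathbb{F}_r^{\ast}$-scaling) coincide, so $\{b_k\} = \{b'_k\}$ in $\mathbb{Z}/N\mathbb{Z}$.

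Finally, to arrange that $1, 2 \in B$, observe that $1 = \theta^0$ lies in $\mathbb{F}_r \cdot 1 \subseteq V$, contributing the residue $0 \in B$, while $\theta = \theta^1$ lies in $\mathbb{F}_r \cdot \theta \subseteq V$, contributing the residue $1 \in B$. So $\{0, 1\} \subseteq B$. A translation by $+1$ modulo $N$ preserves the $B_h$-property and produces a $B_h$-set containing $1$ and $2$; reading its elements as integers in $\{1, 2, \dotsc, N\} \subseteq [q^h]$ gives the desired subset. The main technical hurdle will be the passage from a congruence modulo $N$ to a polynomial identity in $\mathbb{F}_r[x]$ of degree at most $h$; once this degree count is properly set up, the remainder of the argument is routine.
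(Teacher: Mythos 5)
Your proof is correct: it is the standard finite-field construction of Bose and Chowla, which the paper cites from~\cite{BC62} without reproducing. Working in $\mathbb{F}_{r^{h+1}}$ (with $r=q-1$) and taking the discrete logarithms modulo $N=(r^{h+1}-1)/(r-1)$ of the $r+1$ lines of the plane $\mathbb{F}_r\cdot 1+\mathbb{F}_r\cdot\theta$ is exactly the ``projective'' form of that construction that yields $q=r+1$ elements rather than $r$, which is the count the paper's statement requires; the degree argument, the passage to a polynomial identity in $\mathbb{F}_r[x]$, unique factorisation, and the final translation by $+1$ to arrange $\{1,2\}\subseteq B$ are all sound. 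The only point glossed over is the case where some factor $u_k+v_kx$ is a constant (i.e.\ $\alpha_k\in\mathbb{F}_r^\ast$, the line $\mathbb{F}_r\cdot1$): here one should note that the degrees of the two products must agree, so the number of constant factors on each side matches before applying unique factorisation to the remaining monic linear factors --- but this is routine and does not affect correctness.
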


The importance of $B_h$-sets for our proof of Theorem~\ref{thm:oddgirth} is the following claim. We will only make use of the `moreover' part but we need the main statement to apply induction. Let $S$ be a $B_h$-set and let $D$ be its difference set as before. By a \defn{circuit}, we mean a closed walk in which vertices may be repeated but edges may not. 

\begin{claim}[Cycle fact]\label{claim:cycle}
Let $2\leq \ell\leq h$ be integers. Suppose that $D$ contains not necessarily distinct $d_1,\dotsc,d_\ell$ such that for some $1\leq s\leq \ell - 1$ we have
\begin{equation}\label{eq:cycle_fac}
    d_1 + \dotsb + d_s = d_{s+1} + \dotsb + d_\ell.
\end{equation}
Let $M$ be the multigraph on vertex set $S$ built by adding, for each $1\leq i\leq \ell$, an edge $e_i$ between the pair of vertices whose difference is $d_i$. Then the edge set of $M$ can be decomposed into a collection of circuits. Moreover, if $\ell$ is odd, then the graph obtained from $M$ by deleting duplicate edges contains an odd cycle.
\end{claim}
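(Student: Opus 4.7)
The plan is to translate the hypothesised sum equation in $D$ into a multiset identity in $S$ via the $B_h$-structure, deduce from this that every vertex of $M$ has even degree, and, in the odd case, pull out an odd cycle. First, I would write each edge $e_i$ as the unique pair $x_i < y_i$ in $S$ with $y_i - x_i = d_i$, where uniqueness comes from $S$ being a Sidon set. The hypothesis then rearranges to
\[
    y_1 + \dotsb + y_s + x_{s+1} + \dotsb + x_\ell = x_1 + \dotsb + x_s + y_{s+1} + \dotsb + y_\ell,
\]
which expresses the same integer as two sums of $\ell$ elements of $S$. Since $\ell \leq h$, padding both sides with $h - \ell$ copies of any fixed element of $S$ reduces this to the defining condition of a $B_h$-set, so the multisets $\{y_1, \dotsc, y_s, x_{s+1}, \dotsc, x_\ell\}$ and $\{x_1, \dotsc, x_s, y_{s+1}, \dotsc, y_\ell\}$ must coincide.

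Next, fix a vertex $v \in S$. Its degree in $M$ equals its multiplicity in the full multiset of endpoints $\{x_1, y_1, \dotsc, x_\ell, y_\ell\}$. The multiset identity just established tells us that $v$ has the same multiplicity on each half of this split, so its total multiplicity, and hence $\deg_M(v)$, is even. Since every vertex of $M$ has even degree, the classical extension of Euler's theorem decomposes $E(M)$ into edge-disjoint circuits, proving the main statement.

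For the moreover part, assume $\ell$ is odd. The lengths of the circuits in the decomposition sum to $\ell$, so at least one circuit has odd length. A closed walk of odd length in a multigraph contains an odd cycle by a standard argument (if the walk is not already a cycle, split it at a repeated vertex into two shorter closed walks whose lengths sum to the original odd length, and induct). An odd cycle has at least three distinct vertices and thus no parallel edges, so it survives the deletion of duplicate edges, yielding the desired odd cycle in the simple graph.

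I expect the main obstacle to be locating the right way to apply the $B_h$-property: the hypothesis is an equation in $D$, and the slightly slick rearrangement of subtractions into additions is what lets us invoke $B_h$ (via $B_\ell$) to obtain a multiset identity on endpoints in $S$. Once this lift is secured, the parity-of-degrees computation and the passage from an odd circuit to an odd cycle are both routine graph theory.
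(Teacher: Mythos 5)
Your proof is correct, but it takes a genuinely different route than the paper's, and an arguably cleaner one. Both arguments start the same way: rewrite the hypothesis~\eqref{eq:cycle_fac} as an equality of two sums of $\ell$ elements of $S$ and use the $B_h$-property (via $\ell\leq h$; your padding trick is one way to see that a $B_h$-set is a $B_\ell$-set, a fact the paper simply asserts) to conclude that the multisets $\{y_1,\dots,y_s,x_{s+1},\dots,x_\ell\}$ and $\{x_1,\dots,x_s,y_{s+1},\dots,y_\ell\}$ coincide. From there the paper runs an induction on $\ell$: it identifies a coincidence among the endpoints (e.g.\ $y_1=x_2$ or $y_1=y_{s+1}$), merges or cancels the corresponding pair of edges $e_i$, obtains a shorter instance of the hypothesis, and splices the two edges back into the recursively obtained circuit decomposition. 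You instead observe that the full endpoint multiset $\{x_1,y_1,\dots,x_\ell,y_\ell\}$ is the disjoint union of those two equal halves, so every multiplicity — and hence every degree in $M$ — is even; Veblen's theorem then hands over the circuit decomposition in one step, and the odd-cycle extraction from an odd circuit (plus the check that a cycle of length at least three survives the removal of parallel edges, which the paper glosses over) finishes the argument. Your version localises the use of the $B_h$-property to a single multiset identity and replaces the paper's bookkeeping-heavy induction with a degree-parity observation plus one classical theorem; the paper's version is more self-contained, carrying everything out by elementary manipulation. Both are valid.
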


\begin{proof}
We prove that the edge set of $M$ can be decomposed into a collection of circuits by induction on $\ell$. If $\ell=2$, then we have $d_1=d_2$ and $M$ is just a pair of edges between the same two vertices. 

Suppose that $\ell \geq 3$. Let $x_1, \dotsc, x_\ell, y_1, \dotsc, y_\ell \in S$ be such that $d_i = y_i - x_i$ for all $1 \leq i \leq \ell$. From equation~\eqref{eq:cycle_fac} we obtain 
\begin{equation*}
    y_1 + \dotsb + y_s + x_{s+1} + \dotsb + x_\ell = x_1 + \dotsb + x_s + y_{s+1} + \dotsb + y_\ell
\end{equation*}
and so, as $S$ is a $B_h$-set and $\ell \leq h$,  $y_1, \dotsc, y_s, x_{s+1}, \dotsc, x_\ell$ must be $x_1, \dotsc, x_s, y_{s+1}, \dotsc, y_\ell$ in some order. In particular, $y_1 = x_i$ for some $1\leq i \leq s$ or $y_1=y_j$ for some $s+1\leq j \leq \ell$.  In the first case we may assume by relabelling that $y_1 = x_2$ ($y_1 \neq x_1$ as $0 \not\in D$). Then $d \coloneqq d_1 + d_2 = y_2 - x_1$ which is in $D$ as $y_2 > x_2 = y_1 > x_1$. Moreover, $d + d_3 + \dotsb +d_s = d_{s+1} + \dotsb + d_\ell$. Thus, by induction, the edges of the multigraph obtained from $M$ by deleting $e_1$ and $e_2$ and adding an edge $e$ between $x_1$ and $y_2$ can be decomposed into a collection of circuits. Adding back the edges $e_1$ and $e_2$ (which form a path between  $x_1$ and $y_2$) in place of $e$, we obtain a suitable decomposition of the edges of $M$.

In the second case we may assume by relabelling that $y_1 = y_{s + 1}$. If $x_1 = x_{s + 1}$, then $d_1 = d_{s + 1}$ and hence $d_2 + \dotsb + d_s = d_{s + 2} + \dotsb + d_\ell$. By induction, the edges of $M$ without $e_1$ and $e_{s + 1}$ can be decomposed into a collection of circuits. Adding the circuit consisting of $e_1$ and $e_{s + 1}$, we obtain a decomposition of all edges of $M$. 
If $x_1 \neq x_{s + 1}$, then without loss of generality $x_{s + 1}> x_1$ and we have $d \coloneqq d_1 - d_{s + 1} = x_{s + 1} - x_1 \in D$ and $d + d_2 + \dotsb + d_s = d_{s+2} + \dotsb + d_\ell$. By induction, there is a decomposition of the edges of the multigraph obtained from $M$ by deleting $e_1$ and $e_{s+1}$ and adding a new edge $e$ incident to $x_1$ and $x_{s+1}$. Adding back the edges $e_1$ and $e_{s+1}1$ (which form a path between $x_1$ and $x_{s+1}$) in place of $e$, we obtain a suitable decomposition of the edges of~$M$.    

For the `moreover' part of the claim, note that if $M$ has an odd number of edges, then the collection of circuits we obtain must contain a circuit with an odd number of edges. Any such circuit contains a cycle of odd length, which completes the proof of the claim.
\end{proof}

\subsection{Proof of Theorem~\ref{thm:oddgirth}}\label{sec:oddconstruction}

Fix a graph $F$ with at least one odd cycle, let $f = \abs{V(F)}$ and let $h$ be the odd girth of $F$. 
Let $p$ be a prime large enough that $\big[\lfloor p/h \rfloor\big]$ has a $B_h$-subset $S$ of size $f$ that contains $1$ and $2$. 
Let the elements of $S$ be $s_1 = 1, s_2 = 2, \dotsc, s_f$ in ascending order and let the difference set of $S$ be $D = \{s_j - s_i \colon i < j\}$. 
As $F$ has at least one edge we may take a copy $F^{\ast}$ of $F$ with vertex set $S$ which includes the edge $s_1s_2$. We write
\begin{equation*}
    E = \{s_j - s_i \colon i < j \text{ and } s_i s_j \in E(F^{\ast})\}
\end{equation*}
for the set of distances corresponding to edges of $F^{\ast}$. Since $s_1 = 1$ and $s_2 = 2$, we have $1 \in E$. Note that $E \subseteq D$. Let $\ov{D} = D + p\Z$ and $\ov{E} = E + p\Z$.

Let $\vv{X_n}$ be a digraph given by applying Lemma~\ref{lem:digraphexistence} with $g = h$. Recall that (a) the underlying graph $X_n$ has chromatic number $n$, (b) the orientation $\vv{X_n}$ is acyclic, (c) for any pair of vertices $u, v$, there is at most one directed path from $u$ to $v$ in $\vv{X_n}$, and (d) for every cycle in $X_n$, the corresponding oriented cycle in $\vv{X_n}$ has at least $h$ changes of direction.
For vertices $u < v$ (in the reachability ordering), write $d(u,v)$ for the length of the unique directed path from $u$ to $v$ in $\vv{X_n}$. We now define a digraph $\vv{G}$ with vertex set $V(\vv{X_n})$ as follows.
\begin{center}
    The edge $\vv{uv}$ is present if $u < v$ and $d(u, v) \in \ov{E}$.
\end{center}
As before, the digraph $\vv{G}$ produced is acyclic. We now show that $G$, its underlying undirected graph, satisfies the properties stated in Theorem~\ref{thm:oddgirth}. Firstly since $1 \in E$, the graph $X_n$ is a subgraph of $G$ so $\chi(G) \geq n$. Next we check that every induced subgraph of $G$ with sufficiently large chromatic number contains an induced copy of $F$. The argument is almost identical to the corresponding argument at the end of Section~\ref{sec:construction}. We colour each edge $uv$ of $G$ (and $\vv{G}$), where $u < v$, with the colour $d(u, v) \bmod{p}$. As before, any monochromatic directed path in $\vv{G}$ of length $pf$ gives rise to an induced copy of $F$. Let $H$ be an induced subgraph of $G$ which does not contain an induced copy of $F$. Each subgraph of $H$ consisting of $i$-coloured edges is $pf$-colourable by Proposition~\ref{prop:boundedpaths} and so, by a product colouring, $H$ is itself $c'_F$-colourable for $c'_F = (pf)^{\abs{E}}$.

It remains to check that the odd girth of $G$ is at least $h$. Suppose for a contradiction that $G$ contains an odd cycle $v_0v_1\dotsc v_{\ell - 1}$ of length $\ell < h$. Since $\vv{G}$ contains no directed cycles, without loss of generality the path $v_{\ell - 1}v_0v_1$ in $G$ is not a directed path in $\vv{G}$. We may further assume that since $\ell$ is odd, the path $v_0v_1v_2$ in $G$ is a directed path in $\vv{G}$ from $v_0$ to $v_2$. For each edge $v_i v_{i + 1}$ in this cycle (where here and throughout we take subscript addition in the $v_i$ to be modulo $\ell$), there is a directed path $\vv{P_i}$ from $v_i$ to $v_{i + 1}$ or from $v_{i + 1}$ to $v_i$ in $\vv{X_n}$ depending on the direction of the edge between $v_i$ and $v_{i + 1}$ in $\vv{G}$. For each $i$ let $P_i$ be the undirected path underlying $\vv{P_i}$ and concatenate these paths $P_i$ in the natural way to obtain a walk $W$ in $X_n$ which begins at $v_0$, visits $v_1, \dotsc, v_{\ell - 1}$ in turn, and then finally returns to $v_0$.

For each traversal of an edge in the walk, when the direction of traversal is the same as the direction of the edge in $\vv{X_n}$, we consider the traversal of this edge in the walk to be `coloured black'. When the two directions are different we consider the traversal of the edge to be `coloured red'. For each of the paths $P_i$, the walk's traversals corresponding to $P_i$ are all the same colour, and we have assumed that the colours for $P_0$ and $P_1$ are both black, so the walk changes colour at most $\ell - 2$ times.

Consider the subgraph $L$ of $G$ consisting of all vertices and edges contained in the walk~$W$. 
We examine two cases based on whether or not $L$ contains a cycle. 

\textbf{Case 1:} Suppose that $L$ is acyclic. Then $L$ is a tree. Let $e = uv$ be an edge of $L$. The graph $L-e$ (i.e.\ the graph with vertex set $V(L)$ and edge set $E(L)\setminus\{e\}$) has exactly two components, one of which contains $u$ and the other of which contains $v$. It is clear that the traversals of the edge $e$ in the walk $W$ alternate between traversing from $u$ to $v$ and traversing from $v$ to $u$. Moreover, since the walk starts and ends at $v_0$, there are an equal number of traversals of $e$ of in each direction. In other words, the total number of traversals of $e$ coloured black is equal to the total number coloured red. Since this is true for all edges $e$, the total number of black traversals of edges in the walk is equal to the total number of red traversals.

Let $S_1 = \bigl\{0 \leq i \leq \ell - 1 \colon \vv{v_iv_{i + 1}}\in E(\vv{G})\bigr\}$ and $S_2 = \bigl\{0 \leq i \leq \ell - 1 \colon \vv{v_{i + 1}v_i}\in E(\vv{G})\bigr\}$ so that $S_1$ and $S_2$ form a partition of $\{0,1,\dotsc,\ell - 1\}$. For $0 \leq i \leq \ell - 1$, let $d_i$ be the length of the path $P_i$. Then the total number of black traversals is equal to the sum of the $d_i$ for $i \in S_1$ and the total number of red traversals is the sum of the $d_i$ for $i\in S_2$. Hence by the above
\begin{equation*}
    \sum_{i\in S_1} d_i = \sum_{i\in S_2} d_i.
\end{equation*}
Since the edges $v_iv_{i + 1}$ are all present in $G$, we have $d_i \in \ov{E}$ for all $i$. Let $\ov{d_i} \equiv d_i \bmod{p}$ be chosen so that $0 \leq \ov{d_i} \leq p - 1$. Then $\ov{d_i} \in E$. The sums $\sum_{i \in S_1} \ov{d_i}$ and $\sum_{i \in S_2} \ov{d_i}$ are the same modulo $p$ and each have fewer than $h$ terms, all of which are at most $p/h$. It follows that, in fact,
\begin{equation*}
    \sum_{i \in S_1} \ov{d_i} = \sum_{i \in S_2} \ov{d_i}.
\end{equation*}
Noting that $E\subseteq D$, we can apply the cycle fact to show that the graph on vertex set $S$ with edges between pairs of vertices at distances $\ov{d_0},\dotsc,\ov{d_{\ell-1}}$ contains an odd cycle of length at most $\ell$. Since $\ov{d_0}, \dotsc, \ov{d_{\ell-1}} \in E$, by the definition of $E$ and the fact that $S$ is a Sidon set, $F^{\ast}$ contains a cycle of this length, which is a contradiction. This completes the analysis of the case where $L$ does not contain a cycle.

\textbf{Case 2:} Suppose that $L$ contains a cycle. Let $\Gamma$ be the first cycle produced by the walk, and label its vertices as $c_0, c_1, \dotsc, c_{r - 1}$ in cyclic order around $\Gamma$, where $c_0$ is the vertex amongst these that $W$ arrives at first. Let $C = \{c_0, \dotsc, c_{r - 1}\}$. We will show that $\Gamma$ contains fewer than $h$ changes of direction, which contradicts property (d) of $\vv{X_n}$. In what follows, we are only concerned with the portion of walk $W$ from its first visit to $c_0$ to the point when cycle $\Gamma$ is formed. Let $W'$ be this segment of $W$. In particular, $c_0$ is the first vertex of $W'$ and the final edge traversal in $W'$ completes cycle $\Gamma$.

Suppose that as we travel along $W'$, there is an occasion on which we arrive at $c_i$ for some $0\leq i\leq r-1$ and the next vertex in $C$ that we visit is $c_j$ for some $j\not \in\{i - 1,i,i + 1\}$, where here and throughout we take addition in the subscripts of the $c_i$ to be modulo~$r$. Note that $W'$ does not terminate when it reaches $c_j$ since the final edge it traverses must be an edge of $\Gamma$. This portion of the walk contains a path $P$ from $c_i$ to $c_j$ which avoids every vertex in $C\setminus\{c_i,c_j\}$. Consider the sets of edges $\{c_ic_{i + 1}, c_{i + 1}c_{i+2}, \dotsc, c_{j-1}c_{j}\}$ and $\{c_ic_{i - 1}, c_{i - 1}c_{i - 2}, \dotsc, c_{j+1}c_{j}\}$. Each of these sets of edges form a cycle with path $P$, so at the first time after the formation of $P$ at which all the edges in one set have been traversed, the graph walked so far contains a cycle. Since the union of these sets is the edge set of $\Gamma$, this occurs strictly before the formation of $\Gamma$, which is a contradiction. Thus, after $W'$ visits $c_i$, the next vertex of $C$ that it visits is one of $c_{i - 1}$, $c_i$ and $c_{i + 1}$.

Now suppose that after visiting $c_i$ for some $0\leq i\leq r - 1$, the next vertex of $C$ that walk $W'$ visits is $c_{i + 1}$, and suppose further that it arrives at $c_{i + 1}$ via an edge other than $c_ic_{i + 1}$. Clearly it also does not arrive at $c_{i + 1}$ via edge $c_{i+2}c_{i + 1}$, so it arrives via an edge not in $\Gamma$ and hence $W'$ does not terminate when it reaches $c_{i + 1}$. The portion of the walk between $c_i$ and $c_{i + 1}$ contains a path $P'$ from $c_i$ to $c_{i + 1}$ which avoids every vertex in $C\setminus\{c_i,c_{i + 1}\}$ and avoids the edge $c_ic_{i + 1}$. Similarly to above, at the first time after the formation of $P'$ at which either $c_ic_{i + 1}$ has been traversed or all of $c_ic_{i - 1},\dotsc,c_{i+2}c_{i + 1}$ have been traversed, the graph walked by $W'$ contains a cycle. This occurs strictly before the formation of $\Gamma$, which is a contradiction.

Similarly, if the next vertex of $C$ that $W'$ visits after $c_i$ is $c_{i - 1}$, then it arrives at $c_{i - 1}$ via the edge $c_ic_{i - 1}$. These facts combined imply that there are vertices $\gamma_0,\dotsc,\gamma_{s - 1}$ of $C$, with $\gamma_0=c_0$ and $\gamma_{j+1}$ a neighbour of $\gamma_j$ in $\Gamma$, such that $W'$ has the following form. It is a (possibly empty) walk from $\gamma_0$ to itself avoiding $C \setminus \{\gamma_0\}$, followed by a traversal of the edge $\gamma_0\gamma_1$, followed by a (possibly empty) walk from $\gamma_1$ to itself avoiding $C \setminus \{\gamma_1\}$, followed by a traversal of the edge $\gamma_1\gamma_2$, and so on, concluding with a traversal of the edge $\gamma_{s - 2}\gamma_{s - 1}$, which completes cycle $\Gamma$. Note that the vertices $\gamma_0, \gamma_1, \dotsc, \gamma_{s - 1}$ form a walk $W''$ on $\Gamma$ traversing all the edges of $\Gamma$. Let the traversals of each edge have the same colour in $W''$ as the corresponding traversals in $W'$. Note that $W''$ changes colour at most as many times as $W'$.

We say that a vertex $c_i\in C$ is \defn{crossed} by the walk $W''$ if $c_{i - 1}, c_i, c_{i + 1}$ are consecutive vertices of $W''$ in either order. If there are two vertices of $C$ not crossed by $W''$, then $W''$ does not traverse all edges of $\Gamma$, so there is at most one vertex not crossed by $W''$. Now, if the edges $c_{i - 1}c_i$ and $c_ic_{i + 1}$ are a change of direction, then when $W''$ crosses $c_i$ the traversals of these two edges are of different colours. It follows that every time $W''$ crosses $c_i$, it changes colour between the traversals of the two edges. Hence, since $W''$ changes colour at most $h-2$ times and all but at most one of the vertices in $C$ are crossed, there are at most $h-1$ changes of direction in $\Gamma$. We have thus obtained a cycle in $\vv{X_n}$ with fewer than $h$ changes of direction, which gives the required contradiction. Therefore, the odd girth of $G$ is at least $h$.

\section{Tournaments}\label{sec:tournaments}

We now prove Theorem~\ref{thm:trnmt}, which is restated below. Our strategy is akin to that used by Alon, Pach and Solymosi in~\cite{APS01}. Recall that a \defn{tournament} is a complete graph in which each edge has an orientation, and that a tournament is \defn{transitive} if, for all distinct vertices $u,v,w$, the presence of the edges $\vv{uv}$ and $\vv{vw}$ in the tournament implies the presence of the edge $\vv{uw}$. The \defn{chromatic number} $\chi(T)$ of a tournament $T$ is the smallest possible number of parts in a partition of its vertex set in which each part induces a transitive tournament. We will prove the following corollary of Theorem~\ref{thm:main}.

\trnmt*

Note that there are certain tournaments which are contained in every tournament of sufficiently large chromatic number; all such tournaments were described explicitly in~\cite{BCC...13}. Before proving the corollary, we state two results we will need. The first is the following standard corollary of Dilworth's theorem~\cite{dilworth}. 

\begin{lemma}\label{lem:dilworth}
Let $(a_1,a_2,\ldots, a_k)$ be a sequence of distinct elements and let $<$ be a strict total order on them. If this sequence has no decreasing \textnormal{(}with respect to $<$\textnormal{)} subsequence of length greater than $m$, then it can be partitioned into at most $m$ increasing subsequences.
\end{lemma}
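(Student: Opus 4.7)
The plan is to invoke Dilworth's theorem directly, as the naming of the lemma suggests. Define a partial order $\preceq$ on the index set $\{1, 2, \ldots, k\}$ by setting $i \preceq j$ if and only if either $i = j$, or $i < j$ and $a_i < a_j$. Reflexivity is built in, antisymmetry follows since $i \neq j$ forces the indices to differ, and transitivity is immediate because both the natural order on $\{1, \ldots, k\}$ and the order $<$ on the elements are transitive.

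Next I would establish the two correspondences that make the reduction work. A chain $i_1 \prec i_2 \prec \cdots \prec i_t$ in the poset is exactly a subsequence $(a_{i_1}, a_{i_2}, \ldots, a_{i_t})$ that is increasing with respect to $<$. Conversely, if $i \neq j$ are incomparable in $\preceq$, then either $i < j$ and $a_i > a_j$, or $j < i$ and $a_j > a_i$; in either case the two corresponding elements of the sequence are in strictly decreasing order. Iterating this observation, any antichain of size $t$ in the poset gives (by listing its indices in increasing order of index) a decreasing subsequence of $(a_1, \ldots, a_k)$ of length $t$.

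By the hypothesis of the lemma, the longest decreasing subsequence has length at most $m$, so the maximum antichain in this poset has size at most $m$. Dilworth's theorem then yields a decomposition of $\{1, \ldots, k\}$ into at most $m$ chains, which by the correspondence above translates directly into a partition of the original sequence into at most $m$ increasing subsequences.

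There is no real obstacle here beyond setting up the poset cleanly; the lemma is essentially a repackaging of Dilworth's theorem once one notices that chains and antichains correspond to increasing and decreasing subsequences respectively. One could equally give a direct pigeonhole proof by labelling each index $i$ with the length of the longest decreasing subsequence ending at $a_i$ (a value in $\{1, \ldots, m\}$) and checking that each label class is automatically an increasing subsequence, but the Dilworth route is the cleanest presentation.
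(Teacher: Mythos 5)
Your proof is correct and matches the paper's intent: the paper simply states the lemma as a ``standard corollary of Dilworth's theorem'' without proof, and your argument supplies exactly the standard derivation (poset on indices, chains $\leftrightarrow$ increasing subsequences, antichains $\leftrightarrow$ decreasing subsequences).
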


We will also use the following result originally proved by R\"odl and Winkler in~\cite{RW89}. Recall that an \defn{ordering} of a graph or digraph $G$ is a strict total ordering of $V(G)$, and an \defn{ordered (di)graph} is a pair $(G,<)$ where $G$ is a (di)graph and $<$ is an ordering of $G$. 

\begin{theorem}[R\"odl and Winkler~\cite{RW89}]\label{thm:rodlwinkler}
For any ordered graph $(B,<)$, there exists a graph $F$ such that, for every ordering $<'$ of $F$, $(B,<)$ is an ordered induced subgraph of $(F,<')$. 
\end{theorem}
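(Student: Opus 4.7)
The theorem is a Ramsey-type statement about re-orderings, and my plan is to derive it from the ordered induced Ramsey theorem of \nesrod: for every ordered graph $(A,<_A)$ and integer $r$, there exists an ordered graph $(C,<_C)$ such that any $r$-colouring of the ordered induced copies of $(A,<_A)$ in $(C,<_C)$ contains a monochromatic copy. The first reduction is to observe that it suffices to build an \emph{ordered} graph $(F,<_F)$ such that every re-ordering $<'$ of $V(F)$ contains $(B,<)$ as an ordered induced subgraph of $(F,<')$; the underlying unordered graph $F$ then witnesses the theorem, since the original $<_F$ is itself one admissible re-ordering, and every other re-ordering is covered by hypothesis.

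Set $k=|V(B)|$. Any re-ordering $<'$ of $V(F)$ determines, for each ordered induced copy of $(B,<)$ inside $(F,<_F)$, a permutation in $S_k$: namely the relative order that $<'$ imposes on the copy's vertices compared to $<$. I would construct $F$ by iteratively enlarging the ordered host so that, no matter which $<'$ the adversary chooses, at least one ordered induced copy of $(B,<)$ realises the identity permutation under $<'$. At each stage one invokes ordered induced Ramsey on $(B,<)$ with $r = k!$ colours (one per potential permutation type of the ``order defect''), which yields enough local control to absorb one more family of adversarial re-orderings into the structure.

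The main obstacle is bridging the local-to-global gap: the adversary's choice of $<'$ is a single global permutation of $V(F)$ which simultaneously imposes orderings on all $\binom{|V(F)|}{k}$ candidate $k$-subsets, and these orderings are highly correlated, so no single application of Ramsey can dispatch all re-orderings at once. To overcome this I plan to use a \nesrod-style partite amalgamation: build $F$ as a carefully glued union of many copies of $(B,<)$ arranged along a structural template, then argue inductively that any global re-ordering of the amalgam must preserve the desired order-type on at least one copy, because the number of ``bad'' re-orderings on each copy is bounded and the amalgamation can be chosen large enough relative to this bound. The delicate step is the bookkeeping across the $(k!)^{|V(F)|}$ possible adversarial re-orderings; a compactness alternative would be to exhibit a countable homogeneous ordered structure (e.g.\ a suitably generic random ordered graph) with the desired property and then extract a finite witness by König's lemma.
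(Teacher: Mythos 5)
The paper does not prove Theorem~\ref{thm:rodlwinkler}: it is cited from R\"odl and Winkler~\cite{RW89} and used as a black box in Section~\ref{sec:tournaments}. Judged on its own terms, your sketch identifies the right reduction (build an ordered host $(F,<_F)$) and the right toolbox (ordered induced Ramsey), but the specific plan does not close, and you stop exactly where the work begins. Colouring the ordered induced $(B,<)$-copies of $(F,<_F)$ by their ``order defect'' in $S_k$ and applying Ramsey with $r=k!$ colours produces a large substructure all of whose copies have the \emph{same} defect $\pi$; nothing in the sketch forces $\pi=\mathrm{id}$, and a monochromatic family with constant non-identity defect yields nothing on its own. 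This is precisely the ``local-to-global gap'' you flag, and neither the iterated-enlargement heuristic nor the partite-amalgamation gesture explains how to cross it. The compactness detour also merely relocates the difficulty: compactness of the space of linear orders does reduce to the infinite case, but exhibiting an infinite graph every ordering of which yields $(B,<)$ is at least as hard as the finite problem, and a ``generic random ordered graph'' has no obvious reason to have this property.

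The missing idea is to colour in dimension two, not dimension $k$. Fix the reference order $<_F$; given an adversarial $<'$, $2$-colour each pair $\{u,v\}$ by whether $<_F$ and $<'$ agree on it. Use a doubled target $(B^*,<^*)$ --- say $(B,<)$ followed by a reversed copy of $(B,<)$ --- so that both $(B^*,<^*)$ and its order-reversal contain $(B,<)$ as an ordered induced subgraph. Ordered induced Ramsey for $2$-vertex sub-objects (one application for the edge type, one for the non-edge type) yields an $(F,<_F)$ in which every such pair-colouring admits an ordered induced $(B^*,<^*)$-copy whose pairs are monochromatic within each adjacency type. On that copy, because $<'$ is a linear order and therefore determined by its restriction to pairs, $<'$ is forced to equal $<_F$, or its reversal, or one of two readable mixed patterns, and $B^*$ can be engineered so that each surviving pattern still hands you $(B,<)$. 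A single Ramsey application on pairs thus handles all $k$-subsets at once, which is exactly what dissolves the correlation problem you were worried about; the genuine residual work lies in the edge/non-edge bookkeeping, not in any $(k!)^{|V(F)|}$-sized case analysis.
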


We are now ready to prove Theorem~\ref{thm:trnmt}.

\begin{proof}[Proof of Theorem~\ref{thm:trnmt}]
Fix an ordered tournament $(T, <)$ with vertices $x_1<\dots <x_{|T|}$. Let $(B,<)$ be the ordered back-edge graph of $(T,<)$, that is, the ordered graph on the same vertex set as $T$ and with the same ordering as $T$, where $x_ix_j\in E(B)$ for $i<j$ exactly when $\vv{x_jx_i}\in E(T)$. Let $F$ be the graph obtained by applying Theorem~\ref{thm:rodlwinkler} to $(B,<)$. 
Now apply Theorem~\ref{thm:main} to $F$. We obtain a constant $C_T=c_F$ and a graph $L$ of (arbitrarily large) chromatic number $n$ with $\omega(L)=\omega(F)=\omega$ such that every induced subgraph of $L$ with chromatic number greater than $C_T$ contains an induced copy of $F$. Fix an arbitrary ordering $<'$ of $L$ and define an ordered tournament $(S,<')$ with the same vertex set as $L$ and the same ordering as $L$ by orienting $\vv{xy}$ for $x<'y$ if $xy\not \in E(L)$ and $\vv{yx}$ otherwise (so that $(L,<')$ is the ordered back-edge graph of $(S,<')$).

We now show that $\chi(S)\geq n/\omega$, and hence that $\chi(S)$ is arbitrarily large.
Consider a transitive subtournament $A$ in $S$ and write its vertices as a sequence $(a_1,a_2, \dotsc, a_{\abs{A}})$ such that $\vv{a_ia_j}\in E(S)$ for each $i<j$.
Observe that this sequence has no decreasing (with respect to $<'$) subsequence of length $\omega+1$ as this would give rise to an $(\omega+1)$-clique in $L$. Hence, by Lemma~\ref{lem:dilworth}, we may partition the sequence into at most $\omega$ increasing subsequences. Each of these corresponds to an independent set in $L$ and hence, $\omega\cdot \chi(S)\geq n$ as required.

It remains to show that every subtournament $R$ of $S$ with $\chi(R)> C_T$ contains a copy of $T$. If $R$ is such a subtournament, then clearly the induced subgraph $R'$ of $L$ with the same vertex set as $R$ has $\chi(R')>C_T$. Hence, by our assumptions on $L$, $R'$ contains an induced copy of $F$. Consider the ordering of $F$ given by the restriction of $<'$ to this copy. By the construction of $F$, under this ordering it contains an induced ordered copy of $(B,<)$ which guarantees the existence of a copy of $T$ in $R$ as required. This completes the proof of the theorem.
\end{proof}

\section{Hypergraphs}\label{sec:hypergraphs}

In this section we prove Theorem~\ref{thm:hypergraphs}, which is restated below.
Recall that in this paper all hypergraph edges have size at least two. For a hypergraph $\cG$ and a set $X \subseteq V(\cG)$ of vertices of $\cG$, the \defn{subhypergraph of $\cG$ induced on $X$}, denoted $\cG[X]$, is the hypergraph with vertex set $X$ and edge set $\{e\colon e\subseteq X, e\in E(\cG)\}$. For hypergraphs $\cF$ and $\cG$, we will say that $\cG$ is \defn{$\cF$-free} if it does not contain $\cF$ as an induced subhypergraph. Recall from the introduction that a hypergraph is said to be \defn{strongly $t$-colourable} if its vertices can be $t$-coloured such that no edge contains two vertices of the same colour, and that a hypergraph \defn{covers} a pair of vertices if it has an edge containing both of them. 

\hypergraphs*

The proof of Theorem~\ref{thm:hypergraphs} resembles those of Theorems~\ref{thm:main} and~\ref{thm:oddgirth} in that we start with an oriented base (hyper)graph with large chromatic number and certain `reachability' properties, to which we add edges based on `distances' between vertices modulo some prime $p$. As before, we will ascertain which edges to add to the base hypergraph by placing a copy of $\cF$ on a $B_3$-set, an approach reminiscent of that used in the earlier proofs. Once we have constructed $\cG$, the arguments we use to show it has the desired properties also bear strong similarities to those used above.

We now introduce an analogue of reachability and distances between vertices in the hypergraph setting. Given a hypergraph $\cG$ and a total order $\prec$ on its vertex set, we construct the \defn{$\prec$-digraph} $\vv{G}$ on vertex set $V(\cG)$ by adding to $E(\vv{G})$ the directed edges $\vv{v_1 v_2}$, $\vv{v_2 v_3}$, \ldots, $\vv{v_{a - 1} v_a}$ for each edge $e = \{v_1 \prec v_2 \prec \dotsb \prec v_a\}$ of $\cG$. The \defn{$\prec$-graph} $G$ is the underlying undirected graph of $\vv{G}$. If $\cG$ has girth at least $3$, then no two distinct edges intersect in more than one vertex, and it follows that every pair of distinct vertices has at most one directed edge between them in $\vv{G}$. Also, in this setting every edge in $\vv{G}$ comes from a unique edge of~$\cG$.

The base hypergraph for our construction should have large chromatic number as well as an ordering $\prec$ on its vertex set such that its $\prec$-digraph is suitable for defining reachability and distance. The lemma below states that such hypergraphs exist.

\begin{lemma}\label{lem:nesrodhyp}
For all integers $k \geq 2$, $g \geq 3$ and $n \geq 2$, there exists a $k$-uniform hypergraph~$\cY_n$ with chromatic number $n$ and girth at least $g$, and an order $\prec$ on its vertex set such that, for every cycle in its $\prec$-graph $Y_n$, the corresponding oriented cycle in its $\prec$-digraph $\vv{Y_n}$ has at least $g$ changes of direction.
\end{lemma}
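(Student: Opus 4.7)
The plan is to mimic the proof of Lemma~\ref{lem:nesrod}, with two main modifications to account for the $k$-uniform setting. Set $N = (k-1)(n-1)+1$ and inductively build a sequence $\cB_0, \dotsc, \cB_N$ of $k$-uniform, $N$-partite hypergraphs, together with orderings $\prec_i$ on $V(\cB_i)$; at the end take $\cY_n = \cB_N$ with ordering $\prec_N$. For each $\cB_i$, write its parts as $A_i^1, \dotsc, A_i^N$ and define $\prec_i$ so that every vertex of $A_i^j$ precedes every vertex of $A_i^{j'}$ when $j < j'$, with arbitrary order inside each part. Take $\cB_0$ to be a transversal matching: for each $k$-subset of parts include exactly one edge hitting those parts in one vertex each, with no two edges sharing a vertex, so that $\abs{A_0^j} = \binom{N-1}{k-1}$. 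Given $\cB_{i-1}$, apply Theorem~\ref{thm:EH_hyp} to obtain an $\abs{A_{i-1}^i}$-uniform hypergraph $\cH_i$ with chromatic number at least $n$ and girth at least $g$, and construct $\cB_i$ by taking a copy $\cB_{i-1}(h)$ of $\cB_{i-1}$ for each $h \in E(\cH_i)$ and identifying $A_{i-1}^i(h)$ with $h$. Set $A_i^i = V(\cH_i)$ and $A_i^j = \bigsqcup_h A_{i-1}^j(h)$ for $j \neq i$.

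For the chromatic number lower bound, I would follow the nested-monochromatic-edge argument of Lemma~\ref{lem:nesrod}: a hypothetical proper $(n-1)$-colouring forces a monochromatic edge in $\cH_N$, and then inside that copy of $\cB_{N-1}$ one finds a monochromatic edge in the embedded $\cH_{N-1}$, and so on, producing a copy $\cB_0(h_1)$ of $\cB_0$ whose $N$ parts are each monochromatic. Since $N > (k-1)(n-1)$, the pigeonhole principle gives at least $k$ parts of common colour, and the transversal edge of $\cB_0$ on those parts is then monochromatic, a contradiction. The girth bound follows by induction on $i$: $\cB_0$ has no hypergraph cycles (it is a matching); a cycle in $\cB_i$ either lives inside one copy $\cB_{i-1}(h)$ (apply induction) or transitions between copies at vertices of $A_i^i = V(\cH_i)$, and the sequence of copies then contains a cycle in $\cH_i$ of length at least $g$, yielding at least $g$ edges in the original.

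The direction-changes claim is the most delicate step. Because every hyperedge of every $\cB_i$ is transversal to the $N$ parts, every edge of the $\prec_i$-digraph $\vv{\cB_i}$ is directed from a vertex in a lower-indexed part to a vertex in a higher-indexed part. It follows that any path in the $\prec_i$-graph between two distinct vertices of the same part must contain a change of direction, because a monotone walk in the part index cannot have both endpoints in the same part. I then induct on $i$: a cycle $C$ in the $\prec_i$-graph of $\cB_i$ has each edge arising from a hyperedge in a unique copy $\cB_{i-1}(h)$, so the edges of $C$ inherit tags in $E(\cH_i)$. If all tags coincide then $C$ lies in one copy and induction applies; otherwise, as in the proof of Lemma~\ref{lem:nesrod}, some subsequence of distinct tags forms a cycle in $\cH_i$, so by the girth of $\cH_i$ there are at least $g$ distinct tags and hence at least $g$ maximal subpaths of $C$ within single copies. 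Each such subpath has both endpoints in $A_i^i$ and, by the preceding observation, contributes a change of direction, giving the required bound of $g$ overall.

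The main obstacle is the base-case bookkeeping: since a $k$-uniform edge needs $k$ vertices of the same colour to be monochromatic, one must inflate the number of parts from $n$ (which sufficed in Lemma~\ref{lem:nesrod}) to $(k-1)(n-1)+1$, and thus perform the induction over $N$ rather than $n$ levels. A secondary subtlety is verifying that the transversality of every edge of every $\cB_i$ is preserved under the recursive identifications, since that is the structural fact underlying both the well-definedness of the hierarchical order $\prec_i$ and the ``lower-to-higher part'' property driving the direction-changes argument.
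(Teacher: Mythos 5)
Your proposal is correct and follows essentially the same route as the paper's own sketch: the same inflation of the number of parts to $(k-1)(n-1)+1$ so that a pigeonhole gives $k$ monochromatic parts of a common colour, the same transversal matching $\cB_0$, the same recursive pasting along sparse high-chromatic hypergraphs $\cH_i$, the same choice of $\prec$ respecting the final parts and restricting coherently to every sub-copy, and the same ``endpoints of a maximal single-copy path both lie in $A_i^i$, hence a change of direction'' induction for the direction-changes bound. You also correctly flag the two points the paper treats as the only nontrivial bookkeeping: that transversality of every edge survives the identifications (so edges of the $\prec$-digraph go from lower- to higher-indexed parts) and that the restriction of the global order to each copy is again of the hierarchical form.
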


\begin{proof}
    Ne\v{s}et\v{r}il and R\"odl~\cite{NR79} proved that for all integers $k \geq 2$, $g \geq 3$ and $n \geq 2$ there is a $k$-uniform hypergraph with chromatic number $n$ and girth at least $g$ which is strongly $a$-colourable\footnote{Although the strongly $a$-colourable condition does not appear in their theorem statement, it is emphasised in their proof.} for $a = (k - 1) n + 1$. 
    
    Fix integers $k \geq 2$, $g \geq 3$, and $n \geq 2$, and let $a = (k - 1)n + 1$. By the result of Ne\v{s}et\v{r}il and R\"odl, there is a $k$-uniform hypergraph $\cY_n$ with chromatic number $n$ and girth greater than $(g - 1)(a - 1)$ which is strongly $a$-colourable. Fix a strong $a$-colouring $c \colon V(\cY_n) \to \{1, \dotsc, a\}$ of $\cY_n$. Let $\prec$ be any ordering of $V(\cY_n)$ in which every vertex of colour $i$ precedes every vertex of colour $j$ whenever $i < j$. Let $Y_n$ and $\vv{Y_n}$ respectively be the $\prec$-graph and $\prec$-digraph of $\cY_n$. 
    
    Consider any edge $\vv{uv}$ in $\vv{Y_n}$. There is some edge of $\cY_n$ containing both $u$ and $v$ and so $c(u) \neq c(v)$. By the choice of $\prec$, we must have $c(u) < c(v)$. Letting $u_1 u_2 \dotsc u_t$ be any directed path in $\vv{Y_n}$, by the preceding discussion we have $1 \leq c(u_1) < \dotsb < c(u_t) \leq a$. Hence, the length of any directed path in $\vv{Y_n}$ is at most $a - 1$. 
    
    Consider any cycle $C$ in $Y_n$: each edge of this cycle comes from an edge of a cycle in $\cY_n$ and so $C$ has length greater than $(g - 1)(a - 1)$. Every directed path in $\vv{Y_n}$ has length at most $a - 1$, so $C$ has at least $g$ changes of direction in $\vv{Y_n}$, as required.
\end{proof}

We are now ready to prove Theorem~\ref{thm:hypergraphs}.

\begin{proof}[Proof of Theorem~\ref{thm:hypergraphs}]
Fix a hypergraph $\cF$ with at least one edge. Let $f$ be the number of vertices of $\cF$ and let $n \geq 2$. Let $m \geq 2$ be the minimum edge size of $\cF$. Let $\cY_n$ be an $m$-uniform hypergraph of chromatic number at least $n$ and girth at least four given by Lemma~\ref{lem:nesrodhyp}, with an ordering $\prec$ on its vertex set such that in the associated digraph $\vv{Y_n}$ every cycle has at least three changes of direction. In particular, in $\vv{Y_n}$ there are no directed cycles and there is at most one directed path between any pair of vertices. If there is a directed path from $u$ to $v$ in $\vv{Y_n}$, then we write $u < v$ and define the distance between $u$ and $v$, written $d(u,v)$, to be the length of this path.

We will define a hypergraph $\cG$ satisfying the conditions of Theorem~\ref{thm:hypergraphs} by adding edges to $\cY_n$ as follows. Start by picking a prime $p$ such that $[\lfloor p/(4m)\rfloor]$ contains a $B_3$-set $S'$ of size $f$, then define $S = 2m \cdot S'=\{2ms'\colon s'\in S'\}\subseteq [(p-1)/2]$. Let the elements of $S$ be $s_1, \dotsc, s_f$ in ascending order and let $D = \{s_j - s_i \colon i < j\}$ be the difference set of $S$. Note that all elements of $D$ are positive multiples of $2m$ and are at most $(p - 1)/2$.

Taking a copy $\cF^{\ast}$ of $\cF$ with vertex set $S$, we define
\begin{equation*}
    E = \{(s_{i_2} - s_{i_1}, s_{i_3} - s_{i_2}, \dotsc, s_{i_a} - s_{i_{a - 1}}) \colon i_1 < i_2 < \dotsb < i_a \textnormal{ and } \{s_{i_1},s_{i_2}, \dots, s_{i_a}\} \in E(\cF^{\ast})\}
\end{equation*}
to be the analogue of the sets of allowable distances from the proofs of Theorems~\ref{thm:main} and~\ref{thm:oddgirth}. Note that every entry of every tuple in $E$ is in $D$. Let $\ov{D} = D + p\Z$ and $\ov{E} = \{(d_1 + k_1 p, \dotsc, d_{a - 1} + k_{a - 1} p)\colon k_1, \dotsc, k_{a - 1} \in \Z \textnormal{ and } (d_1, \dotsc, d_{a - 1})\in E\}$. Construct hypergraph $\cG$ by \emph{adding} edges to $\cY_n$ as follows.
\begin{center}
    The edge $\{v_1, v_2, \dotsc, v_a\}$ is added if $v_1 < v_2 < \dotsb < v_a$ and $(d(v_1, v_2), d(v_2, v_3), \dotsc, d(v_{a - 1}, v_a)) \in \ov{E}$.
\end{center}

There are two types of edges in $\cG$: those which appear in $\cY_n$ and the newly added ones. By construction, if two vertices are in a common edge, then they are $<$-comparable. The edges which appear in $\cY_n$ are of the form $\{v_1,\dots,v_m\}$ where $d(v_i,v_{i + 1})=1$ for all $i$. Hence, if vertices $u < v$ are both in some edge of $\cY_n$, then $d(u, v) \in [m - 1]$. On the other hand, if $u < v$ are both in an edge that has been newly added, then $d(u, v) \in \ov{D}$ and so $d(u, v) \geq 2m$. In particular, if vertices $u < v$ are both in some edge of $\cG$, then $d(u, v)$ is in $\ov{D} \cup [m - 1]$ and so is not divisible by $p$.

We will now show that this hypergraph satisfies the conditions of Theorem~\ref{thm:hypergraphs}. Firstly, since it contains $\cY_n$, it certainly has chromatic number at least $n$. Next, we will show that every $\cF$-free induced subhypergraph of $\cG$ has bounded strong chromatic number. We begin by defining $L$ to be the graph with vertex set $V(\cG)$ where $uv$ is an edge if $u$ and $v$ are in some common edge of $\cG$. For each edge $uv$ of $L$, $u$ and $v$ are $<$-comparable. Orient the edge from $u$ to $v$ if $u < v$, and from $v$ to $u$ otherwise. Denote the resulting digraph by $\vv{L}$.

All edges $\vv{uv}$ of $\vv{L}$ have endpoints which satisfy $d(u,v)\in \ov{D}\cup [m-1]$, and we colour the edges of $\vv{L}$ (and $L$) by the residue modulo $p$ of this distance. We claim that the vertex set of a monochromatic directed path of length $pf$ in $\vv{L}$ has a subset which induces a copy of $\cF$ in $\cG$. Indeed, by repeatedly jumping at most $p-1$ steps along such a path we can find a sequence of vertices $u_1 < u_2 < \dotsb < u_f$ in the path such that $d(u_i, u_{i + 1}) \equiv s_{i + 1} - s_i \bmod{p}$ for all $i$. Note that, for $i < j$, $d(u_i, u_j) \equiv s_j - s_i \bmod{p}$ and so $d(u_i, u_j) \in \ov{D}$. Hence, $d(u_i, u_j)$ is never 1 and so $\{u_1, \dotsc, u_f\}$ does not contain an edge of $\cY_n$. Thus, by the construction of $\cG$, the vertices $u_1, \dotsc, u_f$ induce a copy of $\cF$, as claimed.

Now let $\cH$ be an $\cF$-free induced subhypergraph of $\cG$. By the above, for each $i\in D\cup [m-1]$ there is no $i$-coloured directed path of length $pf$ in $\vv{L}[V(\cH)]$, so the $i$-coloured subgraph of $L[V(\cH)]$ is $(pf)$-colourable by Proposition~\ref{prop:boundedpaths}. A product colouring demonstrates that $L[V(\cH)]$ is $(pf)^{p - 1}$-colourable.
This colouring is a strong colouring of $\cH$. Hence we may take $c_{\cF} = (pf)^{p - 1}$, which is a constant that only depends on $\cF$.

It remains to show the moreover part of the statement. Let $X = \{v_1,\dots,v_t\}$ be a set of vertices every pair of which is covered by $\cG$. If $t\leq 1$, then the result is clear so assume $t\geq 2$. It follows from the fact that $\cG$ covers every pair from $X$ that each pair is $<$-comparable; we may assume that $v_1 < \dots < v_t$. Define $d_i=d(v_i,v_{i + 1})$ for all $i$ and note that since $d(v_i, v_j) \in \ov{D} \cup [m - 1]$ for all $i < j$, all sums of the form
\begin{equation}\label{eq:hyp}
    \sum_{i_1\leq j\leq i_2} d_j
\end{equation}
with $1 \leq i_1 \leq i_2 \leq t-1$ are in $\ov{D}\cup [m-1]$.

Suppose that $d_i\leq m-1$ for some $i$. If $i<t-1$, then $d_{i + 1}$ and $d_{i}+d_{i + 1}$ differ by at most $m-1$. This cannot occur if $d_{i + 1}\in \ov{D}$, so we conclude that $d_{i + 1}\leq m-1$. Similarly, if $i>1$ then $d_{i - 1}\leq m-1$. Repeating this argument we find that $d_1,\dots,d_{t-1}\in [m-1]$. Since the minimal positive element of $\ov{D}$ is at least $2m$, it follows that $d_1+d_2\not\in \ov{D}$, so $d_1+d_2\in [m-1]$. Applying this repeatedly we find that all sums of the form in~\eqref{eq:hyp} are in $[m-1]$, and hence no pair of vertices from $X$ appear together in an edge of $\cG$ which is not in $\cY_n$. In particular, $\cY_n$ covers all pairs from $X$. 

Let $e$ be an edge of $\cY_n$ containing both $v_1$ and $v_2$, and suppose there is some $v_a$ not in $e$. Let $e_1$, $e_2$ be edges of $\cY_n$ such that $e_i$ contains $v_a$ and $v_i$. If $e_1 = e_2$, then the edges $e$ and $e_1$ form a 2-cycle in $\cY_n$, while if $e_1 \neq e_2$, then the edges $e, e_1, e_2$ form a 3-cycle in $\cY_n$. As $\cY_n$ has girth at least four, neither of these cases is possible. We deduce that $X \subseteq e$, and in particular that $t \leq m$. If $t < m$, then the subhypergraph of $\cG$ induced on $X$ has no edges and thus is clearly an induced subhypergraph of $\cF$. If $t = m$, then the subhypergraph of $\cG$ induced on these vertices has a single edge of size $m$, which again is clearly an induced subhypergraph of $\cF$.

On the other hand, if $d_i > m - 1$ for all $i$, then all the sums of the form in~\eqref{eq:hyp} are in $\ov{D}$. Writing $\ov{d_i}$ for the residue modulo $p$ of $d_i$, we have $\ov{d_i}\in D$ for all $i$ and all sums of the form \begin{equation*}
    \sum_{i_1\leq j\leq i_2} \ov{d_j}
\end{equation*}
with $1 \leq i_1 \leq i_2 \leq t-1$ are in $\ov{D}$. We can deduce from the fact that $D\subseteq [(p-1)/2]$ that in fact all of these sums are in $D$. It now follows from the clique fact that there exist $b_1<\dots<b_t$ in $S$ such that $b_{i + 1} - b_i =\ov{d_i}$ for all $i$. Since $\ov{d_i}\neq 1$ for all $i$, the subhypergraph of $\cG$ induced on $X$ does not include any edge of $\cY_n$, so by the construction of $\cG$ and the fact that $S$ is a Sidon set, the subhypergraph of $\cG$ induced on $X$ is isomorphic to that of~$\cF^{\ast}$ induced on $\{b_1,\dots,b_t\}$, and the claim follows.
\end{proof}

\section{Extensions to infinite families of graphs}\label{sec:infinite}

Recall that the \defn{disjoint union} of a family of graphs is the (possibly infinite) graph consisting of pairwise vertex-disjoint copies of the graphs in the family with no edges between copies. As noted in the introduction, for a finite family of graphs $\cF$ (at least one of which contains an edge), an easy consequence of Theorem~\ref{thm:main} is that there is a constant $c_{\cF}$ and graphs $G$ of arbitrarily large chromatic number and the same clique number as the largest clique number of a graph in $\cF$ such that every induced subgraph of $G$ with chromatic number greater than $c_{\cF}$ contains every member of $\cF$ as an induced subgraph. Taking a disjoint union of the graphs $G$ of arbitrarily large chromatic number gives the following corollary.

\begin{corollary}\label{cor:universalinfinite}
    Let $\omega \geq 2$, and suppose that $\cF$ is a finite family of finite graphs all with clique number at most $\omega$. There exists a constant $c_{\cF}$ and an infinite graph $G$ of infinite chromatic number and clique number at most $\omega$ such that every induced subgraph of $G$ with chromatic number greater than $c_{\cF}$ contains every member of $\cF$ as an induced subgraph.
\end{corollary}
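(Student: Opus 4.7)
The plan is to reduce the corollary to Theorem~\ref{thm:main} applied once to the disjoint union of the members of $\cF$, and then to form the desired infinite graph $G$ as a disjoint union of finite graphs produced by the theorem.

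First, I would let $G_{\cF}$ denote the disjoint union of the finitely many members of $\cF$. Then $G_{\cF}$ is a finite graph whose clique number equals the maximum clique number of a graph in $\cF$, which is at most $\omega$. Assuming some member of $\cF$ has an edge (the edgeless case is trivial, since we may take $G$ to be any infinite graph with infinite chromatic number and clique number at most $\omega$, and then use Ramsey's theorem to extract a large independent set from any induced subgraph with more than $c_{\cF}$ vertices), Theorem~\ref{thm:main} yields a constant $c_{\cF} := c_{G_{\cF}}$ and, for every positive integer $n$, a finite graph $G_n$ with $\chi(G_n) \geq n$, with $\omega(G_n) = \omega(G_{\cF}) \leq \omega$, and such that every $G_{\cF}$-free induced subgraph of $G_n$ has chromatic number at most $c_{\cF}$. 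The key observation is that containing $G_{\cF}$ as an induced subgraph is equivalent to containing pairwise vertex-disjoint induced copies of every member of $\cF$; in particular, any induced subgraph that contains $G_{\cF}$ contains every member of $\cF$ as an induced subgraph.

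Next, I would take $G$ to be the disjoint union $\bigsqcup_{n \geq 1} G_n$, formed on pairwise vertex-disjoint copies of the $G_n$. Then $\chi(G) \geq \chi(G_n) \geq n$ for every $n$, so $\chi(G) = \infty$. Since any clique is connected, $\omega(G) = \sup_n \omega(G_n) \leq \omega$. Finally, to check the induced subgraph property, let $H$ be an induced subgraph of $G$ with $\chi(H) > c_{\cF}$, and write $H$ as the disjoint union of $H_n := H[V(H) \cap V(G_n)]$ over $n \geq 1$. Since $\chi(H) = \sup_n \chi(H_n)$, some $H_n$ satisfies $\chi(H_n) > c_{\cF}$; by the defining property of $G_n$, this $H_n$ contains $G_{\cF}$ as an induced subgraph, and hence $H$ does too, giving us every member of $\cF$ as an induced subgraph.

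I do not anticipate a genuine obstacle: the argument is essentially a packaging of Theorem~\ref{thm:main} applied to the single graph $G_{\cF}$, combined with the standard fact that the chromatic number of a disjoint union equals the supremum of the chromatic numbers of its components. The only minor point to watch is the edgeless degenerate case noted above.
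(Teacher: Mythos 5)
Your proof is correct and takes essentially the same route as the paper: apply Theorem~\ref{thm:main} to $G_{\cF}$, the disjoint union of the members of $\cF$, to obtain the constant $c_{\cF}$ and finite graphs $G_n$ of arbitrarily large chromatic number, and then take $G$ to be the disjoint union of the $G_n$. The one thing you add beyond the paper's brief indication is an explicit treatment of the degenerate case where every member of $\cF$ is edgeless (handled via Ramsey's theorem and the hypothesis $\omega \geq 2$), which the paper sidesteps by tacitly assuming some member has an edge; this is a reasonable and correct completion rather than a different approach.
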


Does this phenomenon occur for infinite families of graphs? To this end we will say that a (possibly infinite) family of graphs $\cF$ is \defn{durable}
if there exists an infinite graph $G$ with infinite chromatic number such that every infinite-chromatic induced subgraph of $G$ contains every member of $\cF$ as an induced subgraph. If such a $G$ exists we will call it a \defn{witness} of $\cF$'s durability. We note a few properties of witnesses and durability.
\begin{itemize}[noitemsep]
    \item A subfamily of a durable family is durable.
    \item Every infinite-chromatic induced subgraph of a witness is also a witness.
    \item Every witness contains every graph in $\cF$ as an induced subgraph.
\end{itemize}
Corollary~\ref{cor:universalinfinite} says that every finite family of graphs is durable. Moreover, if at least one graph in the finite family contains an edge, then there is a witness whose clique number is no bigger than the largest clique number of a member of the family.

We are interested in which infinite families of finite graphs are durable. There certainly are some durable infinite families, for example any family whose members are all disjoint unions of complete graphs is durable as witnessed by the disjoint union of $K_1$, $K_2$, $K_3$, \ldots. This example suggests that for a durable family with unbounded chromatic number, the disjoint union of the members of the family might be a witness of the family's durability.
If this were the case, then this disjoint union would be a minimal witness. The following result confirms this hypothesis.

\begin{theorem}\label{thm:durable}
    Let $\cF$ be a countable family of finite graphs. Let $G_{\cF}$ be the disjoint union of the members of $\cF$. 
    \begin{enumerate}[label = \textnormal{(\alph{*})}, noitemsep]
        \item If $\cF$ is durable, then every witness of $\cF$'s durability contains~$G_{\cF}$ or $K_{\infty}$ as an induced subgraph.
        \item If $\{\chi(F) \colon F \in \cF\}$ is unbounded, then $\cF$ is durable if and only if $G_{\cF}$ is a witness of $\cF$'s durability.
    \end{enumerate}
\end{theorem}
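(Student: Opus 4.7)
The plan is to prove part (a) directly and derive part (b) from it. For (a), assume $G$ is a witness of $\cF$'s durability and contains no infinite clique, and enumerate $\cF$ as $F_1,F_2,\dotsc$. I would build, recursively on $k$, pairwise disjoint vertex sets $U_1,U_2,\dotsc$ in $V(G)$ with $G[U_k]\cong F_k$ and no edges of $G$ between distinct $U_i$'s, subject to the invariant that $L_k \coloneqq G\setminus \bigcup_{i\leq k} N_G[U_i]$ has infinite chromatic number. Once constructed, $\bigcup_k U_k$ induces a copy of $G_{\cF}$, since the invariant at stage $k-1$ forces the new $U_k$ to avoid $\bigcup_{i<k} N_G[U_i]$. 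The crucial observation enabling the recursion is that the witness property is inherited by every infinite-chromatic induced subgraph: if $H$ is one such, then any infinite-chromatic induced subgraph of $H$ is also one of $G$ and so contains every $F\in\cF$. Consequently each $L_{k-1}$ is an infinite-chromatic witness with no infinite clique, and contains $F_k$ as an induced subgraph.

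The inductive step reduces to the following sublemma: if $L$ is an infinite-chromatic induced subgraph of $G$ (hence a witness with no infinite clique) and $F\in\cF$, then some copy $U$ of $F$ in $L$ satisfies $\chi(L\setminus N_L[U])=\infty$. I would prove this by contradiction, using the resulting structure to build an infinite clique in $L$. Suppose every copy $U$ of $F$ in $L$ has $\chi(L\setminus N_L[U]) <\infty$. First note this hypothesis is inherited by every infinite-chromatic induced subgraph $L'$ of $L$: a copy of $F$ in $L'$ is also one in $L$, and $L'\setminus N_{L'}[U]$ is an induced subgraph of $L\setminus N_L[U]$, which has finite chromatic number. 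For any fixed copy $U$, subadditivity of chromatic number forces $\chi(L[N_L[U]])=\infty$, and since $|U|$ is finite we also have $\chi(L[N_L(U)\setminus U])=\infty$. Writing $N_L(U)\setminus U=\bigcup_{v\in U}(N_L(v)\setminus U)$ as a union of finitely many sets, a pigeonhole on $U$ produces some $v\in U$ with $\chi(L[N_L(v)])=\infty$.

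Now iterate: let $v_1$ be such a vertex and set $L^{(1)}\coloneqq L[N_L(v_1)]$. This is an infinite-chromatic witness with no infinite clique (both properties inherited from $L$) and still satisfies the contradictory hypothesis (by the inheritance above). Applying the same recipe to $L^{(1)}$ with the same $F$ gives $v_2\in N_L(v_1)$ with $\chi(L[N_L(v_1)\cap N_L(v_2)])=\infty$, and continuing in this way produces an infinite sequence $v_1,v_2,\dotsc$ that is an infinite clique in $L$, contradicting our assumption on $L$. This proves the sublemma and hence (a). For (b), the `if' direction is immediate. For `only if', let $G$ be a witness of $\cF$'s durability; by (a), $G$ contains $G_{\cF}$ or $K_\infty$ as an induced subgraph. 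In the first case $G_{\cF}$ is itself a witness by the same transitivity used above. In the second, $K_\infty$ is an infinite-chromatic induced subgraph of the witness $G$ and thus contains every $F\in\cF$, so each $F\in\cF$ is complete; the unbounded-chromatic hypothesis then makes $G_{\cF}$ a disjoint union of cliques of unbounded size, and any infinite-chromatic induced subgraph of this disjoint union contains a clique of every finite size and hence every $F\in\cF$.

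The main obstacle is the sublemma, specifically verifying that the contradictory hypothesis genuinely survives passing to common neighborhoods and that the pigeonhole over the finite set $U$ really delivers a vertex $v \in U$ with $\chi(L[N_L(v)])=\infty$. Once these two points are checked, the recursion in (a) and the case analysis in (b) are straightforward.
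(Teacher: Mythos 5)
Your argument is correct but takes a genuinely different route from the paper. The paper first proves Lemma~\ref{lemma:unbddclique} (every infinite graph contains an infinite clique, has bounded clique number, or contains the disjoint union of all finite cliques as an induced subgraph) and then treats these three cases separately; in the bounded-clique-number case it preprocesses the witness by repeatedly replacing it with the subgraph induced on some vertex's neighbourhood of infinite chromatic number (terminating because clique number strictly decreases), after which every vertex has finitely-colourable neighbourhood and deleting the closed neighbourhood of an \emph{arbitrary} copy of each $F_k$ is automatically safe. You bypass Lemma~\ref{lemma:unbddclique} entirely: you split only on whether an infinite clique is present, and in the clique-free case you prove a sublemma that for each $F\in\cF$ some copy $U$ has $\chi(L\setminus N_L[U])$ infinite, by showing that failure of this for every copy lets you iterate a pigeonhole over $U$ (using that $N_L(U)\setminus U\subseteq\bigcup_{v\in U}N_L(v)$ and subadditivity of $\chi$ over finite covers) and over common neighbourhoods, producing an infinite clique and contradicting the hypothesis. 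Your route is more uniform (it needs no separate case for graphs of unbounded-but-finite clique number, and so avoids Lemma~\ref{lemma:unbddclique}), at the cost of re-running the selection argument at every stage of the recursion rather than preprocessing once; the paper's route yields a reusable and independently interesting structural lemma and then a slightly cleaner removal step. Both the construction of $G_\cF$ via disjoint closed-neighbourhood removals and your derivation of (b) from (a) match the paper's logic.
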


Theorem~\ref{thm:durable}(b) shows that if a family of unbounded chromatic number is durable, then it has a witness whose clique number is no bigger than the largest clique number of a member of the family. An immediate application of Theorem~\ref{thm:durable} is Theorem~\ref{thm:girthbad}: the family, $\cF_g$, of finite graphs of girth at least $g$ is not durable. Indeed, this family has unbounded chromatic number and so if it were durable, then $G_{\cF_g}$ would witness this. However, the disjoint union of graphs of girth at least $g + 1$ is an infinite-chromatic induced subgraph of $G_{\cF_g}$ that does not contain the cycle of length $g$.

The following lemma will be needed in the proof of Theorem~\ref{thm:durable}.

\begin{lemma}\label{lemma:unbddclique}
    Every infinite graph either contains an infinite clique, has bounded clique number or contains the disjoint union of all finite cliques as an induced subgraph.
\end{lemma}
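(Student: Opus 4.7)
The plan is to prove the contrapositive: if $G$ is an infinite graph with no infinite clique and unbounded clique number, then $G$ contains $\bigsqcup_{n \geq 1} K_n$ as an induced subgraph. The strategy is a greedy construction, building pairwise disjoint and pairwise non-adjacent cliques $C_1, C_2, \dotsc$ in $G$ with $\lvert C_k \rvert = k$, subject to the invariant that $H_k \coloneqq G - \bigcup_{i \leq k} N[C_i]$ remains infinite with unbounded clique number (and automatically no infinite clique, as a subgraph of $G$). Maintaining this invariant reduces the problem to the following key claim: for every infinite graph $H$ with unbounded clique number and no infinite clique, and every $n \geq 1$, there is a clique $C \subseteq V(H)$ with $\lvert C \rvert = n$ and $\omega(H - N[C]) = \infty$.

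To prove the key claim, I would pick a sequence of cliques $K_1, K_2, \dotsc \subseteq H$ with $\lvert K_s \rvert \to \infty$ and apply the infinite Ramsey theorem to unordered pairs $\{K_s, K_t\}$ using three colours: (0) disjoint and non-adjacent; (1) intersecting; (2) disjoint with at least one edge between. After extracting an infinite monochromatic subsequence, colour (0) yields the claim directly: any $n$-subclique of a sufficiently large $K_s$ is the required $C$, since the remaining cliques in the subsequence avoid $N[C]$ entirely and have unbounded size. In colour (1), pigeonhole on the finite first clique produces a vertex $v$ common to infinitely many of the $K_{s_t}$, and $H[N(v)]$ continues to satisfy the hypotheses (via the cliques $K_{s_t} \setminus \{v\}$), so we recurse. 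In colour (2), pigeonhole yields a vertex $u$ with a neighbour in infinitely many of the $K_{s_t}$; a further Ramsey refinement on the sizes $\lvert N(u) \cap K_{s_t} \rvert$ splits into two sub-cases, either an analogous descent into $H[N(u)]$ (when these sizes are unbounded, with the sets $N(u) \cap K_{s_t}$ as the next clique sequence) or the conclusion that each $K_{s_t} \setminus N[u]$ is a clique of unbounded size in $H - N[u]$, so that $\omega(H - N[u]) = \infty$.

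Descents into neighbourhoods in cases (1) and the unbounded sub-case of (2) each adjoin a new vertex to an accumulating clique in $H$, so the no-infinite-clique hypothesis forces only finitely many such steps before the algorithm exits into colour (0) (which resolves the claim immediately) or into the bounded sub-case of colour (2). In the latter, $n = 1$ is handled by taking $C = \{u\}$; for $n \geq 2$ we recurse on the strictly smaller graph $H - N[u]$, using the already pairwise disjoint cliques $K_{s_t} \setminus N[u]$ as the starting sequence. The main obstacle is ensuring termination of this outer recursion: since the starting cliques in $H - N[u]$ are pairwise disjoint, the intersecting case is excluded in subsequent Ramsey analyses, leaving only colours (0) and (2), and the key subtlety is to rule out an infinite iteration of the bounded sub-case by combining the inner descent termination (controlled by the no-infinite-clique hypothesis) with the strict removal of a closed neighbourhood at each outer iteration, eventually forcing a transition to colour (0).
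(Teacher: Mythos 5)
Your overall strategy, building the disjoint union of cliques greedily by repeatedly removing a closed neighbourhood, with the main work pushed into a ``key claim'' proved by Ramsey plus recursion, is a genuinely different route from the paper's. The paper instead takes pairwise disjoint cliques $V_1,V_2,\dots$ of rapidly growing size and performs a global \emph{purification}: for each $v\in V_1$ it halves each later $V_i$ so that $v$ becomes complete or anticomplete to what remains, passes to an infinite subsequence of the $V_i$'s sharing a common adjacency pattern to $V_1$, extracts a sub-clique of $V_1$ of size $\geq |V_1|/2$ that is complete or anticomplete to the union of what remains of the later cliques, and iterates. This produces disjoint cliques $W_1,W_2,\dots$ with $|W_n|=n$ and $W_n$ complete or anticomplete to $\bigcup_{i>n}W_i$, from which the dichotomy is immediate. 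That purification step is exactly what is missing from your argument, and its absence is the source of a real gap.

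The gap is in the termination of your outer recursion in the bounded sub-case of colour $2$, which you yourself flag as the ``key subtlety'' but do not resolve, and in fact it cannot be resolved without adding a new idea. Concretely, take $K_i=\{w_i,v_i^1,\dots,v_i^{i-1}\}$ pairwise disjoint cliques of size $i$, and add the cross-edges $w_iv_j^i$ for all $i<j$ and no others. All pairs are disjoint with exactly one cross-edge, so you are in colour $2$. The pigeonhole is forced to pick $u=w_1$ from the singleton $K_1$, and $|N(w_1)\cap K_j|=1$ for all $j\geq 2$, so you are in the bounded sub-case. Removing $N[w_1]$ deletes exactly $v_j^1$ from each $K_j$, leaving $K_j'=\{w_j,v_j^2,\dots,v_j^{j-1}\}$, which is an isomorphic copy of the original configuration one index down. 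Thus for every $n\geq 2$ the outer recursion loops forever: no inner descent ever occurs (so the no-infinite-clique hypothesis never bites), and strictly removing a closed neighbourhood at each step never forces colour~$0$. The key claim is still true for this $H$, e.g.\ any $n$-subset of a large $K_j$ works, but your algorithm does not find it. To repair the argument you would need, in the bounded sub-case, to control $|N(C)\cap K_{s_t}|$ for a whole $n$-clique $C$ rather than a single vertex $u$, which is essentially the halving/purification step of the paper's proof. I would also note that your descent into $H[N(v)]$ is into the \emph{common} neighbourhood, so it correctly accumulates a clique of the $v$'s, and the soundness of the recursions (that a good $C'$ in the smaller graph is a good $C$ in $H$) is fine; only termination fails.
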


\begin{proof}
    Let $G$ be an infinite graph with unbounded clique number. Let $(a_i)_{i\geq 1}$ be a sequence of natural numbers in which each term is taken to be large relative to all previous terms. Fix a copy of $K_{a_1}$ in $G$ on vertex set $V_1$. By identifying a copy of $K_{a_1+a_2}$ in $G$, we can find a copy of $K_{a_2}$ on vertex set $V_2$ disjoint from $V_1$. Continuing in this manner, we obtain a sequence $(V_i)_{i\geq 1}$ of pairwise disjoint sets of vertices of $G$ such that $V_i$ induces a copy of $K_{a_i}$ for each $i$.

    Select a vertex $v\in V_1$, and for each $i\geq 2$ remove at most half of the vertices from $V_i$ so that $v$ is either adjacent to every remaining vertex or not adjacent to any of them. Repeat this process for each of the other vertices in $V_1$, removing at most half of the vertices currently in each $V_i$ at each step, so that at the end of the process the remaining sets of vertices $V'_i\subseteq V_i$ satisfy $\abs{V'_i}\geq \abs{V_i}/2^{a_1}$ and have the property that for all $i\geq 2$ and $v\in V_1$, $v$ is either adjacent to every vertex in $V'_i$ or not adjacent to any of them.

    For each $i\geq 2$ we can now define a vector $b_i\in \{0,1\}^{a_1}$ by letting the $j$th entry be $1$ if the $j$th vertex of $V_1$ is adjacent to every vertex in $V'_i$ and $0$ otherwise. There are only finitely many such vectors, so there exists a sequence $(i_k)$ such that all vectors $b_{i_k}$ agree. By relabelling, we may therefore assume that all vectors $b_i$ are equal to some $b$. If at least half the entries in $b$ are $1$'s, then by considering the vertices in $V_1$ corresponding to $1$ entries, for some $t\geq a_1/2$ we obtain a copy of $K_{t}$ in $G$, every vertex of which is adjacent to every vertex in $\bigcup_{i\geq 2}V'_i$. Otherwise, for some $t\geq a_1/2$ we obtain a copy of $K_t$ none of whose vertices have a neighbour in this union.

    Repeat this process for $V'_2$, then what remains of $V'_3$, and so on. If the $a_i$ are chosen appropriately, then it follows that there exists a sequence of pairwise disjoint sets of vertices of $G$, $(W_n)_{n\geq 1}$, such that $W_n$ induces a copy of $K_n$ in $G$ and the possible edges between $W_n$ and $\cup_{i> n}W_i$ are either all present in $G$ or all not present in $G$. If these edges are present for infinitely many $n$, then $G$ contains an infinite clique. If the edges are present for only finitely many $n$, then there is an induced subgraph of $G$ consisting of a disjoint union of arbitrarily large cliques, which in particular contains the disjoint union of all finite cliques as an induced subgraph.
\end{proof}

We are ready to prove Theorem~\ref{thm:durable}. For a vertex $v$ in a (possibly infinite) graph $G$, let $\Gamma(v) \coloneqq \{u\in V(G)\colon uv\in E(G)\}$ be the \defn{neighbourhood} of $v$ in $G$. Similarly, for $V \subseteq V(G)$ we write $\Gamma(V) \coloneqq \{u\in V(G)\colon uv\in E(G) \;\text{for some}\; v\in V\}$.

\begin{proof}[Proof of Theorem~\ref{thm:durable}]
    We first show that (a) implies (b) in the statement of the theorem. Suppose that $\cF$ is durable with $\{\chi(F)\colon F\in \cF\}$ unbounded, and  let $G$ be a witness of $\cF$'s durability. By part (a), $G$ contains $G_{\cF}$ or $K_{\infty}$ as an induced subgraph. In the former case, since $\{\chi(F) \colon F \in \cF\}$ is unbounded, $G_{\cF}$ has infinite chromatic number and is therefore a witness by one of the properties of witnesses noted above. In the latter case, again by the properties noted above, $K_{\infty}$ is a witness of $\cF$'s durability and hence every member of $\cF$ is an induced subgraph of~$K_{\infty}$. Thus, $\cF$ is a family of cliques and it follows from the fact that $\{\chi(F) \colon F \in \cF\}$ is unbounded that $G_{\cF}$ is a witness. 
    
    It remains to prove the first part of the theorem. Suppose that $\cF$ is durable and let $G$ be a witness. By Lemma~\ref{lemma:unbddclique}, $G$ either contains an infinite clique, has bounded clique number or contains the disjoint union of all finite cliques as an induced subgraph. In the first case we are done. In the third case, the disjoint union of all finite cliques is a witness of $\cF$'s durability, so all members of $\cF$ are disjoint unions of cliques. In particular, the disjoint union of all finite cliques contains $G_{\cF}$ as an induced subgraph and so we are done.
    
    Hence, we are left with the case where $G$ has bounded clique number. We may assume that the graph induced on the neighbourhood of each vertex of $G$ is finitely colourable. Indeed, otherwise there exists $v \in V(G)$ such that $G[\Gamma(v)]$ is an infinite-chromatic induced subgraph of $G$. In this case $G[\Gamma(v)]$ is a witness of $\cF$'s durability which has clique number strictly less than that of $G$. Repeating this process finitely many times, we obtain an infinite-chromatic induced subgraph of $G$ that is a witness of $\cF$'s durability and in which the neighbourhood of every vertex is finitely colourable.
    
    Enumerate $\cF$ as $(F_i)_{i \geq 1}$. Since $G$ is a witness, $G$ contains an induced copy of $F_1$, say on vertex set $V_1$. By assumption, the chromatic number of the subgraph of $G$ induced on $V_1\cup\Gamma(V_1)$ is finite. It follows that $G - (V_1\cup\Gamma(V_1))$ is an infinite-chromatic induced subgraph of $G$ and therefore contains every graph in $\cF$ as an induced subgraph. We may now find an induced copy of $F_2$ in this graph, say on vertex set $V_2$, and remove every vertex in $V_2$ or its neighbourhood from $G-(V_1\cup\Gamma(V_1))$ to obtain a new infinite-chromatic induced subgraph of $G$. Continuing in this manner we obtain an induced subgraph of $G$ consisting of the disjoint union of the members of $\cF$, that is, $G$ contains $G_{\cF}$ as an induced subgraph.
\end{proof}

Theorem~\ref{thm:durable} gives an effective test for whether a family of unbounded chromatic number is durable. Still, it is not clear whether there are durable families of unbounded chromatic number whose members are not all disjoint unions of cliques. Theorem~\ref{thm:durable} shows that the existence of such a family is equivalent to a positive answer to the following.

\begin{question}\label{qu:sequencedurable}
    Does there exist a sequence of connected finite graphs $(F_i)_{i\geq 1}$, not all of which are cliques, and positive integers $(c_i)_{i\geq 1}$ such that $\chi(F_i) \to \infty$ and every $F_i$-free subgraph of $F_j$ is $c_i$-colourable for all $i<j$?
\end{question}

We now turn to infinite families with bounded chromatic number. The next result provides some durable families of forests. A graph $F$ is \defn{$\chi$-bounding} if the class of $F$-free graphs is $\chi$-bounded. It is folklore that any $\chi$-bounding graph must be a forest and the celebrated Gy\'{a}rf\'{a}s--Sumner conjecture~\cite{Gyarfas75,Sumner81} asserts the converse. We remark that a beautiful proof of Gy\'{a}rf\'{a}s~\cite{Gyarfas87} shows that every path is $\chi$-bounding and so the following theorem, combined with the first bullet point above, implies Theorem~\ref{thm:pathdurable}.

\begin{theorem}\label{thm:chidurable}
    The family of $\chi$-bounding graphs is durable.
\end{theorem}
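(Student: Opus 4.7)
My plan is to take as a witness a single countable graph with bounded clique number but infinite chromatic number; the desired conclusion will then follow almost directly from the definition of $\chi$-boundedness, once one has compactness on hand to deal with infinite induced subgraphs.

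Concretely, I would let $H$ be the disjoint union $\bigsqcup_{n \geq 1} G_n$, where each $G_n$ is a finite triangle-free graph with $\chi(G_n) = n$. Such graphs exist by the classical constructions of Tutte, Zykov or Mycielski (and are already mentioned in the introduction of the paper). Then $H$ is countable, satisfies $\omega(H) \leq 2$, and has infinite chromatic number. In particular, every induced subgraph $H' \subseteq H$ also satisfies $\omega(H') \leq 2$.

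Now fix any $\chi$-bounding graph $F$, so that by definition there is a function $f_F$ with $\chi(G) \leq f_F(\omega(G))$ for every $F$-free graph $G$. Let $H' \subseteq H$ be any induced subgraph with infinite chromatic number. If $H'$ were $F$-free, then every finite induced subgraph $K$ of $H'$ would also be $F$-free with $\omega(K) \leq 2$, and hence would satisfy $\chi(K) \leq f_F(2)$. The De Bruijn--Erd\H{o}s theorem then lifts this bound to $H'$ itself, yielding $\chi(H') \leq f_F(2) < \infty$, contradicting $\chi(H') = \infty$. Therefore $H'$ must contain an induced copy of $F$, and so $H$ witnesses the durability of the family of $\chi$-bounding graphs.

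There is essentially no obstacle in this strategy: the choice of witness is forced by the shape of the definition (any graph of bounded clique number and infinite chromatic number will do), and the remainder of the argument is a direct unwinding of $\chi$-boundedness, with a single appeal to De Bruijn--Erd\H{o}s so that the finite $\chi$-bound can be applied to $H'$ even when $H'$ is infinite. One pleasing consequence is that the resulting witness $H$ has clique number only two, matching the clique number of every nontrivial $\chi$-bounding graph (which must be a forest).
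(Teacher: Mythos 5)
Your proof is correct and follows essentially the same approach as the paper: take a triangle-free graph of infinite chromatic number as witness and apply the definition of $\chi$-boundedness to conclude that any $F$-free infinite-chromatic induced subgraph would be finitely colourable. The only (welcome) extra care you take is explicitly invoking De Bruijn--Erd\H{o}s to transfer the finite $\chi$-bound to the infinite induced subgraph, a step the paper leaves implicit.
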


\begin{proof}
    Let $G$ be a triangle-free graph with infinite chromatic number. Let $H$ be an induced subgraph of $G$ with infinite chromatic number. Fix a $\chi$-bounding graph $F$ and note that there is some constant $c_F$ such that every triangle-free $F$-free graph is $c_F$-colourable. But $H$ is triangle-free and has chromatic number greater than $c_F$, so it contains $F$ as an induced subgraph. Hence, $G$ witnesses the durability of the family of $\chi$-bounding graphs.
\end{proof}

It is possible that every family with bounded chromatic number is durable (although we suspect this is not the case). An answer to the following very natural question would resolve this.

\begin{question}\label{qu:kcoldurable}
    Is the family of $k$-colourable graphs durable? That is, for each positive integer $k$, is there a graph $G_k$ with infinite chromatic number such that every infinite-chromatic induced subgraph of $G$ contains every finite $k$-colourable graph as an induced subgraph?
\end{question}

It would be natural to ask in addition for the graph $G_k$ to have clique number $k$ to align with Corollary~\ref{cor:universalinfinite}.

\section{Conclusion}\label{sec:conclusion}

Having considered tournaments, hypergraphs and infinite families of graphs, we now focus on the setting of graphs. As discussed in the introduction, Theorem~\ref{thm:main} shows that the class $\cC_r$ of $K_r$-free graphs has a far stronger property than just being a vertex Ramsey class, and it is natural to ask which other families have this property.

\begin{question}\label{qu:Ramsey}
    Which hereditary graph classes $\mathcal{C}$ have the property that for every $F \in \mathcal{C}$ there is a constant $c_F$ and graphs $G \in \mathcal{C}$ of arbitrarily large chromatic number such that every $F$-free induced subgraph of $G$ is $c_F$-colourable?
\end{question}

A good starting point for tackling Question~\ref{qu:Ramsey} would be to resolve the case when $\mathcal C$ is determined by a single excluded graph. 

A case of particular interest is when~$\cC$ is the class of graphs with girth at least $g$ for some $g\geq 3$. Theorem~\ref{thm:oddgirth} asserts that the class of graphs with \emph{odd girth} at least $g$ has the stated property. As mentioned in the introduction we conjecture that the same is true for girth.

\begin{conjecture}\label{conj:girth}
    For every graph $F$ with at least one cycle, there exists a constant $b_F$ and graphs $G$ of arbitrarily large chromatic number and the same girth as $F$ such that every $F$-free induced subgraph of $G$ is $b_F$-colourable.
\end{conjecture}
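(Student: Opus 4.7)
The plan is to imitate the proof of Theorem~\ref{thm:oddgirth} and isolate the place where controlling \emph{all} short cycles (not just odd ones) demands a genuinely new idea. Fix $F$ with girth $g\geq 3$, let $f = \abs{V(F)}$, and pick a prime $p$ large enough that $\bigl[\lfloor p/g \rfloor\bigr]$ contains a $B_g$-subset $S$ of size $f$ containing $1$ and $2$. Place $F$ on $S$ as $F^{\ast}$ (including the edge $s_1 s_2$), and form $E\subseteq D$ and $\ov{E} = E + p\Z$ as in Section~\ref{sec:oddconstruction}. Let $\vv{Y_n}$ be the digraph from Lemma~\ref{lem:nesrod} with $\chi(Y_n) = n$ and property~(d) for the parameter $g$ (so $Y_n$ already has girth at least $g$). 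Define $\vv{G}$ on $V(\vv{Y_n})$ by adjoining $\vv{uv}$ whenever $u < v$ in the reachability order and $d(u,v) \in \ov{E}$, and let $G$ be its underlying undirected graph.

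Three of the four required properties of $G$ follow essentially verbatim from the odd-girth proof. Since $1 \in E$ we have $Y_n \subseteq G$ and so $\chi(G) \geq n$. The colour-edges-by-$d(u,v)\bmod p$ argument, combined with the ``monochromatic directed path of length $pf$ contains an induced copy of $F$'' argument, gives $b_F$-colourability of every $F$-free induced subgraph with $b_F = (pf)^{\abs{E}}$. Taking $n > b_F$ guarantees an induced copy of $F$ in $G$, so $\mathrm{girth}(G) \leq g$. The entire burden of the conjecture lies in proving $\mathrm{girth}(G) \geq g$.

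Suppose a cycle $v_0 v_1 \dotsc v_{\ell - 1}$ in $G$ has $\ell < g$. Build the walk $W$ in $Y_n$ by concatenating the unique directed paths $\vv{P_i}$ in $\vv{Y_n}$ between consecutive cycle vertices, and let $L$ be the subgraph of $Y_n$ traversed. If $L$ contains a cycle, Case~2 of Section~\ref{sec:oddconstruction} transfers directly: the walk forces a cycle in $\vv{Y_n}$ with at most $g-1$ changes of direction, contradicting property~(d). If $L$ is a tree, each edge is traversed equally often in both directions, yielding
\begin{equation*}
    \sum_{i \in S_1} \ov{d_i} = \sum_{i \in S_2} \ov{d_i},
\end{equation*}
where $\ov{d_i} \in E$ is the mod-$p$ residue of the $\vv{Y_n}$-distance along the $i$-th cycle edge. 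Since $\ell \leq g-1 < g$, the cycle fact decomposes the multigraph $M$ on vertex set $S$ with edge $e_i$ at distance $\ov{d_i}$ into a collection of circuits. If some circuit uses at least three distinct vertex pairs of $S$, its underlying simple graph contains a cycle of length at most $\ell$, which by the Sidon property of $S$ corresponds to a cycle of length less than $g$ in $F^{\ast}$, contradicting $\mathrm{girth}(F) = g$.

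The main obstacle is the remaining degenerate subcase, where every component of the circuit decomposition sits on at most two vertex pairs of $S$: then $\ell$ must be even and $\{\ov{d_0}, \dotsc, \ov{d_{\ell - 1}}\}$ consists of paired duplicates (possibly glued into figure-$8$ quadruples at a hub). The cycle fact produces no cycle in $F^{\ast}$ here, because this is precisely where the odd-girth argument invoked the parity ``moreover'' clause. I would attempt to break this impasse in one of three ways. First, by strengthening Lemma~\ref{lem:nesrod} to a base graph in which any closed walk whose edges pair up via duplicated $\vv{Y_n}$-distances must itself be a cycle in $Y_n$, collapsing the bad case into Case~2. Second, by replacing the $B_g$-set $S$ with a more rigid algebraic object (for example a multi-dimensional Sidon set, or a Sidon set in a non-cyclic abelian group) so that the equality $\ov{d_i} = \ov{d_j}$ forces an additional vertex identification in $S$ and hence a short cycle in $F^{\ast}$. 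Third, by enriching each edge with a secondary label drawn from an auxiliary oriented base graph, whose parity interacts with the paired-distance structure to force either a short cycle in $F^{\ast}$ or a sub-cycle of $\vv{Y_n}$ with too few direction changes. Overcoming this even-length degeneracy is the crux of the conjecture.
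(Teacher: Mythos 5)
This statement is an open conjecture in the paper: the authors explicitly write that they believe it but have been unable to prove it, and there is no proof of it anywhere in the paper. Your proposal, to your credit, does not claim otherwise --- you identify a genuine obstruction (the even-length degeneracy in Case~1, where the circuit decomposition from the cycle fact consists only of doubled edges and yields no cycle in $F^{\ast}$) and you stop there, listing three speculative remedies without carrying any of them out. So the proposal is not a proof, and it is not claiming to be one; it is a (correct) diagnosis of where the odd-girth argument breaks.

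Your diagnosis is in fact aligned with the authors' own remarks in Section~\ref{sec:conclusion}: they observe that adding edges $\vv{uv}$ to $\vv{Y_n}$ based solely on residues of $d(u,v)$ modulo $p$ ``will unavoidably introduce many $4$-cycles.'' This is the same phenomenon you have isolated, phrased from the construction side: a $4$-cycle $v_0v_1v_2v_3$ in $G$ with $v_0<v_1<v_2$ and $v_0<v_3<v_2$ all on one directed path of $\vv{Y_n}$ produces a tree (indeed a path) as $L$, and the resulting distance identity $\ov{d_0}+\ov{d_1}=\ov{d_2}+\ov{d_3}$ can be satisfied trivially by doubled differences, so the cycle fact's ``moreover'' clause (which needs an odd number of edges) gives nothing. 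Two small additional points worth flagging: first, you would also need to recheck Case~2 for even $\ell$, since the paper's reduction to ``$P_0$ and $P_1$ both black'' uses that $\ell$ is odd --- this does go through with a slightly weaker bound ($\ell-1$ colour changes rather than $\ell-2$), but it deserves a sentence; second, your three proposed fixes are all nontrivial open-ended research directions rather than checkable lemmas, so as written the proposal does not reduce the conjecture to anything concrete. In short, this is a reasonable analysis of an open problem, not a proof, and the paper does not contain a proof to compare it against.
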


For this conjecture to hold, the class of graphs with girth at least $g$ must certainly be vertex Ramsey. Happily, since cycles are 2-connected this follows from a theorem of Ne\v{s}et\v{r}il and R\"odl~\cite{NR76}.

The graphs $\vv{X_n}$ and $X_n$ described in Section~\ref{sec:base} can be chosen to not contain short cycles. However, if we continue to build our graphs by adding edges $\vv{uv}$ for $u < v$ simply based on residues modulo $p$ of $d(u,v)$, we will unavoidably introduce many 4-cycles. For this reason, a resolution of this conjecture even in the special case where $F$ is the 5-cycle would be very interesting, as such an argument would likely overcome many of the difficulties involved in proving the full conjecture.

Another area for further research is determining the optimal constant $c_F$ in the statement of Theorem~\ref{thm:main}.

\begin{question}\label{q:bestbound}
    For each graph $F$ containing an edge, what is the smallest value of $c_F$ for which Theorem~\ref{thm:main} holds? Can we take $c_F$ bounded by a function of $\chi(F)$?
\end{question}

Clearly $c_F$ must be at least $\chi(F)-1$ for all $F$, and at least $\chi(F)$ if $F$ is not vertex-critical.  
Ne\v{s}et\v{r}il~\cite{Nesconj} has conjectured that the optimal $c_F$ is at most $\chi(F)$ for all $F$.

For triangles, it follows from results in~\cite{CHMS22,SS16} that the optimal $c_{K_3}$ is either 3 or 4 (in particular, the bound $\chi(F) - 1$ for vertex-critical graphs is not always attained). For larger cliques, Bria\'nski, Davies and Walczak~\cite{BDW23} showed that the optimal $c_{K_m}$ is $O(m^3)$; as noted at the start of Section~\ref{sec:mainproof}, the optimal $c_F$ is $O(\abs{V(F)}^9)$ in general.

\section*{Acknowledgements}
The authors would like to thank Jarik Ne\v{s}et\v{r}il for helpful discussions and comments, including pointing out a simpler way of obtaining the base structures in Section~\ref{sec:base} and Lemma~\ref{lem:nesrodhyp}, and for suggesting Question~\ref{q:bestbound}.

{\fontsize{11pt}{12pt}
\selectfont
}


\begin{thebibliography}{10}

\bibitem{APS01}
N.~Alon, J.~Pach and J.~Solymosi.
\newblock {R}amsey-type theorems with forbidden subgraphs.
\newblock {\em Combinatorica}, {\bfseries 21}(2):155--170, 2001.

\bibitem{BHP01}
R.~C. Baker, G.~Harman and J.~Pintz.
\newblock The difference between consecutive primes, {II}.
\newblock {\em Proceedings of the London Mathematical Society}, {\bfseries
  83}(3):532--562, 2001.

\bibitem{BCC...13}
E.~Berger, K.~Choromanski, M.~Chudnovsky, J.~Fox, M.~Loebl, A.~Scott,
  P.~Seymour and S.~Thomassé.
\newblock Tournaments and colouring.
\newblock {\em Journal of Combinatorial Theory, Series B}, {\bfseries
  103}(1):1--20, 2013.

\bibitem{BC62}
R.~C. Bose and S.~Chowla.
\newblock Theorems in the additive theory of numbers.
\newblock {\em Commentarii Mathematici Helvetici}, {\bfseries 37}:141--147,
  1962.

\bibitem{BDW22arxiv}
M.~Bria\'{n}ski, J.~Davies and B.~Walczak.
\newblock Separating polynomial $\chi$-boundedness from $\chi$-boundedness.
  \textit{arXiv:2201.08814v1} preprint, 2022.

\bibitem{BDW23}
M.~Bria\'{n}ski, J.~Davies and B.~Walczak.
\newblock Separating polynomial $\chi$-boundedness from $\chi$-boundedness.
\newblock {\em Combinatorica}, 2023.
\newblock To appear.

\bibitem{CHMS22}
A.~Carbonero, P.~Hompe, B.~Moore and S.~Spirkl.
\newblock A counterexample to a conjecture about triangle-free induced
  subgraphs of graphs with large chromatic number.
\newblock {\em Journal of Combinatorial Theory, Series B}, {\bfseries
  158}:63--69, 2023.

\bibitem{Desc54}
B.~Descartes.
\newblock $k$-chromatic graphs without triangles.
\newblock {\em American Mathematical Monthly}, {\bfseries 61}(5):352--353,
  1954.

\bibitem{dilworth}
R.~P. Dilworth.
\newblock A decomposition theorem for partially ordered sets.
\newblock {\em Annals of Mathematics}, {\bfseries 51}(1):161--166, 1950.

\bibitem{Erdos59highgirth}
P.~Erd{\H{o}}s.
\newblock Graph theory and probability.
\newblock {\em Canadian Journal of Mathematics}, {\bfseries 11}:34--38, 1959.

\bibitem{E17}
L.~Esperet.
\newblock Graph colorings, flows and perfect matchings.
\newblock Habilitation thesis, Universit\'e Grenoble Alpes, 2017.

\bibitem{F70}
J.~Folkman.
\newblock Graphs with monochromatic complete subgraphs in every edge coloring.
\newblock {\em SIAM Journal on Applied Mathematics}, {\bfseries 18}(1):19--24,
  1970.

\bibitem{Gyarfas75}
A.~Gy{\'a}rf{\'a}s.
\newblock On {R}amsey covering-numbers.
\newblock In {\em Infinite and finite sets ({C}olloquium, {K}eszthely, 1973;
  dedicated to {P}. {E}rd\H{o}s on his 60th birthday), {V}ol. {II}}, Colloquia
  Mathematica Societatis J\'{a}nos Bolyai, Vol. 10, pages 801--816.
  North-Holland, Amsterdam, 1975.

\bibitem{Gyarfas87}
A.~Gy{\'a}rf{\'a}s.
\newblock Problems from the world surrounding perfect graphs.
\newblock In {\em Proceedings of the {I}nternational {C}onference on
  {C}ombinatorial {A}nalysis and its {A}pplications ({P}okrzywna, 1985)},
  Zastosowania Matematyki, {\bfseries 19}:413--441, 1987.

\bibitem{HK88}
A.~Hajnal and P.~Komj\'{a}th.
\newblock Embedding graphs into colored graphs.
\newblock {\em Transactions of the American Mathematical Society}, {\bfseries
  307}(1):395--409, 1988.

\bibitem{Kierstead97}
H.~A. Kierstead.
\newblock Classes of graphs that are not vertex {R}amsey.
\newblock {\em SIAM Journal on Discrete Mathematics}, {\bfseries
  10}(3):373--380, 1997.

\bibitem{KT92}
H.~A. Kierstead and W.~T. Trotter.
\newblock Colorful induced subgraphs.
\newblock {\em Discrete Mathematics}, {\bfseries 101}:165--169, 1992.

\bibitem{KZ96}
H.~A. Kierstead and Y.~Zhu.
\newblock Classes of graphs that exclude a tree and a clique and are not vertex
  {R}amsey.
\newblock {\em Combinatorica}, {\bfseries 16}(4):493--504, 1996.

\bibitem{KR86}
P.~Komj\'{a}th and V.~R\"{o}dl.
\newblock Coloring of universal graphs.
\newblock {\em Graphs and Combinatorics}, {\bfseries 2}(1):55--60, 1986.

\bibitem{KS93}
P.~Komj\'{a}th and S.~Shelah.
\newblock A consistent edge partition theorem for infinite graphs.
\newblock {\em Acta Mathematica Hungarica}, {\bfseries 61}(1-2):115--120, 1993.

\bibitem{N79}
J.~Ne\v{s}et\v{r}il.
\newblock {\em Teorie graf{\r{u}}}.
\newblock Praha: St\'atn\'i Nakladatelstv\'i Technick\'e Literatury, 1st
  edition, 1979.

\bibitem{Nesetril96survey}
J.~Ne\v{s}et\v{r}il.
\newblock Ramsey theory.
\newblock In {\em Handbook of Combinatorics}, volume~2, pages 1331--1403. MIT
  Press, 1996.

\bibitem{Nesconj}
J.~Ne\v{s}et\v{r}il, 2023.
\newblock Personal communication.

\bibitem{NR76}
J.~Ne\v{s}et\v{r}il and V.~R\"{o}dl.
\newblock Partitions of vertices.
\newblock {\em Commentationes Mathematicae Universitatis Carolinae}, {\bfseries
  17}(1):85--95, 1976.

\bibitem{NR76edge}
J.~Ne\v{s}et\v{r}il and V.~R\"{o}dl.
\newblock The {R}amsey property for graphs with forbidden complete subgraphs.
\newblock {\em Journal of Combinatorial Theory, Series B}, {\bfseries
  20}(3):243--249, 1976.

\bibitem{NR79}
J.~Ne\v{s}et\v{r}il and V.~R\"{o}dl.
\newblock A short proof of the existence of highly chromatic hypergraphs
  without short cycles.
\newblock {\em Journal of Combinatorial Theory, Series B}, {\bfseries
  27}:225--227, 1979.

\bibitem{rodl77}
V.~R\"odl.
\newblock {O}n the chromatic number of subgraphs of a given graph.
\newblock {\em Proceedings of the American Mathematical Society}, {\bfseries
  64}:370--371, 1977.

\bibitem{RSZ95}
V.~R{\"o}dl, N.~Sauer and X.~Zhu.
\newblock Ramsey families which exclude a graph.
\newblock {\em Combinatorica}, {\bfseries 15}(4):589--596, 1995.

\bibitem{RW89}
V.~R{\"o}dl and P.~Winkler.
\newblock A {R}amsey-type theorem for orderings of a graph.
\newblock {\em SIAM Journal on Discrete Mathematics}, {\bfseries
  2}(3):402--406, 1989.

\bibitem{RS92}
V.~Rödl and N.~Sauer.
\newblock The {R}amsey property for families of graphs which exclude a given
  graph.
\newblock {\em Canadian Journal of Mathematics}, {\bfseries 44}(5):1050--1060,
  1992.

\bibitem{S95}
N.~Sauer.
\newblock On the {R}amsey property of families of graphs.
\newblock {\em Transactions of the Amertican Mathematical Society}, {\bfseries
  374}(3):785--833, 1995.

\bibitem{SZ92}
N.~W. Sauer and X.~Zhu.
\newblock Graphs which do not embed a given graph and the {R}amsey property.
\newblock In {\em Sets, graphs and numbers ({B}udapest, 1991)}, volume~60 of
  {\em Colloquia Mathematica Societatis J\'{a}nos Bolyai}, pages 631--636.
  North-Holland, Amsterdam, 1992.

\bibitem{S22}
A.~Scott.
\newblock Graphs of large chromatic number.
\newblock In \textit{Proceedings of the ICM 2022}. To appear.

\bibitem{SS16}
A.~Scott and P.~Seymour.
\newblock Induced subgraphs of graphs with large chromatic number. {I}. {O}dd
  holes.
\newblock {\em Journal of Combinatorial Theory, Series B}, {\bfseries
  121}:68--84, 2016.

\bibitem{SS20Banana}
A.~Scott and P.~Seymour.
\newblock Induced subgraphs of graphs with large chromatic number. {VI.}
  {B}anana trees.
\newblock {\em Journal of Combinatorial Theory, Series B}, {\bfseries
  145}:487--510, 2020.

\bibitem{SS20}
A.~Scott and P.~Seymour.
\newblock A survey of $\chi$-boundedness.
\newblock {\em Journal of {G}raph {T}heory}, {\bfseries 95}(3):473--504, 2020.

\bibitem{S32}
S.~Sidon.
\newblock Ein {S}atz {\"u}ber trigonometrische {P}olynome und seine {A}nwendung
  in der {T}heorie der {F}ourier-{R}eihen.
\newblock {\em Mathematische Annalen}, {\bfseries 106}:536--539, 1932.

\bibitem{Sumner81}
D.~P. Sumner.
\newblock Subtrees of a graph and the chromatic number.
\newblock In {\em The theory and applications of graphs ({K}alamazoo,
  {M}ichigan, 1980)}, pages 557--576. Wiley, New York, 1981.

\bibitem{Szem75}
E.~Szemer\'{e}di.
\newblock On sets of integers containing no {$k$} elements in arithmetic
  progression.
\newblock {\em Acta Arithmetica}, {\bfseries 27}:199--245, 1975.

\bibitem{VdW27}
B.~L. van~der Waerden.
\newblock {B}eweis einer {B}audetschen {V}ermutung.
\newblock {\em {N}ieuw {A}rchief voor {W}iskunde}, {\bfseries 15}:212--216,
  1927.

\bibitem{Z49}
A.~Zykov.
\newblock {O}n some properties of linear complexes (in {R}ussian).
\newblock {\em Matematicheski\v{\i} Sbornik}, {\bfseries 24}:163--188, 1949.
\newblock For an English translation see {\em American Mathematical Society
  Translations}, {\bfseries 79}, 1952.

\end{thebibliography}
\end{document}